
\documentclass[11pt,a4paper,oneside]{amsart}


\newcounter{commentcounter}

\usepackage{amsmath} 
\usepackage{amssymb} 
\usepackage{amsthm} 
\usepackage{stmaryrd} 
\usepackage[english]{babel} 
\usepackage[font=small,justification=centering]{caption} 
\usepackage[nodayofweek]{datetime}
\usepackage[shortlabels]{enumitem} 
\usepackage[T1]{fontenc} 
\usepackage[utf8]{inputenc} 
\usepackage{ifthen} 
\usepackage{mathabx} 
\usepackage{mathtools} 
\usepackage[dvipsnames]{xcolor} 
\usepackage[pdftex,  colorlinks=true]{hyperref} 
    \hypersetup{urlcolor=RoyalBlue, linkcolor=RoyalBlue,  citecolor=black}
\usepackage{setspace} 
\onehalfspacing
\usepackage{tikz-cd} 
\usepackage{xfrac} 
\usepackage[capitalize]{cleveref} 
\usepackage{stmaryrd}
\usepackage{footnote}

\makeatletter
\@namedef{subjclassname@1991}{Mathematical subject classification 1991}
\@namedef{subjclassname@2000}{Mathematical subject classification 2000}
\@namedef{subjclassname@2010}{Mathematical subject classification 2010}
\@namedef{subjclassname@2020}{Mathematical subject classification 2020}
\makeatother

\newtheorem{thm}{Theorem}[section]
\newtheorem{lemma}[thm]{Lemma}
\newtheorem{corollary}[thm]{Corollary}
\newtheorem{prop}[thm]{Proposition}

\newtheorem{question}[thm]{Question}

\newtheorem{thmx}{Theorem}
\newtheorem{corx}[thmx]{Corollary}

\theoremstyle{definition}
\newtheorem{defn}[thm]{Definition}

\theoremstyle{plain}

    \newtheoremstyle{TheoremNum}
        {\topsep}{\topsep} 
        {\itshape} 
        {-0.25cm} 
        {\bfseries} 
        {.} 
        { }  
        {\thmname{#1}\thmnote{ \bfseries #3}}
    \theoremstyle{TheoremNum}

\newcommand*{\claimproofname}{My proof}





















\def\Z{\mathbb{Z}}


\newcommand{\Gv}{\mathcal{G}_v}
\newcommand{\Gw}{\mathcal{G}_w}
\newcommand{\Ge}{\mathcal{G}_e}


\usepackage{tikz}
\usetikzlibrary{arrows,quotes}
\tikzstyle{blackNode}=[fill=black, draw=black, shape=circle]

\title[LERF property of F-by-Z's and deficiency 1 groups]{On subgroup separability of free-by-cyclic and deficiency 1 groups}
\author{Monika Kudlinska}
\date{September 2022}

\address{Mathematical Institute, Andrew Wiles Building, Observatory Quarter, University of Oxford, Oxford OX2 6GG, UK}
\email{monika.kudlinska@maths.ox.ac.uk}
\date{\today}

\subjclass[2020]{Primary 20E26; Secondary 20J05}

\begin{document}

\maketitle

\begin{abstract}
We show that a free-by-cyclic group with a polynomially growing monodromy is subgroup separable exactly when it is virtually $F_n \times \mathbb{Z}$. We also prove that random deficiency 1 groups are not subgroup separable with positive asymptotic probability.
\end{abstract}

\section{Introduction}

A group $G$ is said to be \emph{subgroup separable}, or \emph{locally extended residually finite (LERF),} if every finitely generated subgroup $H \leq G$ is the intersection of finite index subgroups of $G$. Subgroup separability initially gained prominence through its applications to low-dimensional topology and specifically 3-manifold theory, as it allows for certain immersions to be lifted to embeddings in finite index covers. It has since become useful in a much wider group theoretic setting, in particular in proving profinite rigidity results. For instance, Hughes--Kielak \cite{HughesKielak2022} showed that algebraic fibring is a profinite invariant of LERF groups.

By a classical result of M.~Hall \cite{Hall}, finitely generated free groups are known to be subgroup separable. More recently, D.~Wise \cite{Wise} showed that if $G$ is the fundamental group of a finite graph of finite rank free groups with cyclic edge groups, then $G$ is subgroup separable if and only if it is \emph{balanced}; that is, there does not exist a non-trivial element $g \in G$ such that $g^n$ is conjugate to $g^m$, for some $n \neq \pm m$. Any free-by-cyclic group is balanced, and furthermore, if it admits a linearly growing UPG monodromy then it can be realised as a mapping torus of a cyclic splitting of a finite-rank free group  \cite[Proposition 5.2.2]{NM}. It is therefore tempting to conjecture that such free-by-cyclic groups are subgroup separable. The aim of this paper is to show that this is almost never true.

\begin{thmx}\label{main}
Let $\Phi \in \mathrm{Out}(F_n)$ be a polynomially growing outer automorphism. Then $G = F_n \rtimes_{\Phi} \mathbb{Z}$ is subgroup separable if and only if $\Phi$ is periodic
\end{thmx}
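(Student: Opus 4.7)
My proof has two directions. For the ``if'' direction, suppose $\Phi^k$ is trivial in $\mathrm{Out}(F_n)$, so $\Phi^k = \mathrm{conj}_w$ for some $w \in F_n$. In the index-$k$ subgroup $\langle F_n, t^k\rangle \leq G$, the element $w^{-1}t^k$ commutes with every element of $F_n$ and has infinite order; hence this finite-index subgroup is isomorphic to $F_n \times \ZZ$, which is subgroup separable. Since LERF is preserved by commensurability for finitely generated groups, $G$ is LERF.

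For the ``only if'' direction, suppose $\Phi$ is polynomially growing and not periodic. I will show $G$ is not LERF in three steps. First, some power $\Phi^N$ is UPG and non-trivial, and since LERF passes to and from finite-index subgroups, we may replace $G$ by $F_n \rtimes_{\Phi^N}\ZZ$ and assume $\Phi$ itself is UPG and non-trivial. Second, by the Bestvina--Feighn--Handel filtration for UPG automorphisms, there is a chain of $\Phi$-invariant free factors $\{1\} \leq H_1 \leq \cdots \leq H_r = F_n$ with cyclic quotients on which $\Phi$ acts trivially; at the lowest index $i$ where $\Phi$ is non-trivial we pick $a \in H_{i-1}$ (fixed by $\Phi$) and $b \in H_i$ with $\Phi(b) = ba$, so that $F_2 := \langle a, b\rangle$ is free of rank $2$, $\Phi$-invariant, and yields a subgroup $G_0 := F_2 \rtimes_{\Phi|_{F_2}} \ZZ \leq G$.

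The third and key step is to show $G_0$ is not LERF. I take $H := \langle a^2, b\rangle \leq F_2 \leq G_0$ and show it is not separable. Since $F_2 / \langle\langle H\rangle\rangle^{F_2} \cong \ZZ/2$, the only finite-index subgroups of $F_2$ containing $H$ are $F_2$ itself and $L_0 := \{w \in F_2 : a\text{-exp}(w) \text{ even}\}$, which by Reidemeister--Schreier is free of rank $3$ with basis $\{a^2, b, aba^{-1}\}$. The subgroup $L_0$ is $\Phi^2$-invariant (though not $\Phi$-invariant), and any finite-index subgroup $L \leq G_0$ containing $H$ must have $L \cap F_2$ finite-index in $F_2$ and containing $H$, so $L \cap F_2 \in \{L_0, F_2\}$; both contain $aba^{-1}$. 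Thus $aba^{-1}$ lies in the profinite closure of $H$ in $G_0$ but not in $H$ itself (being a Schreier free generator of $L_0$ independent from those spanning $H$), so $H$ is not separable and $G_0$ is not LERF. Since $G_0 \leq G$, $G$ is not LERF.

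The main obstacle is the second step: extracting the rank-$2$ Dehn-twist subgroup inside a general UPG automorphism relies on the relative train-track structure theory, specifically identifying the lowest non-trivial linear stratum and ensuring the restricted automorphism has the required form $a \mapsto a,\ b \mapsto ba$. The third step, by contrast, is a clean calculation once one notices that the finite-index subgroup lattice of $F_2$ above $\langle a^2, b\rangle$ consists of just two elements, leaving $aba^{-1}$ unseparated in every finite-index subgroup of $G_0$ containing $H$.
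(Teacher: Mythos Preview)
Your ``if'' direction and the reduction to the UPG case are fine and match the paper. The serious problem is in your third step.

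The claim ``since $F_2 / \langle\langle H\rangle\rangle^{F_2} \cong \ZZ/2$, the only finite-index subgroups of $F_2$ containing $H$ are $F_2$ itself and $L_0$'' is false: that quotient only controls the \emph{normal} finite-index subgroups of $F_2$ containing $H$. There are many non-normal ones. Concretely, let $\psi\colon F_2\to S_4$ send $a\mapsto(12)$ and $b\mapsto(1234)$, and set $K=\psi^{-1}(\langle(1234)\rangle)$, a subgroup of index $6$. Then $a^2,b\in K$, so $H\le K$, but $\psi(aba^{-1})=(12)(1234)(12)=(2134)\notin\langle(1234)\rangle$, so $aba^{-1}\notin K$. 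Moreover $\psi\circ\Phi^2=\psi$ (since $\Phi^2(b)=ba^2$ and $\psi(a^2)=e$), so $K$ is $\Phi^2$-invariant, and $L:=\langle K,t^2\rangle$ is a subgroup of $G_0$ of index $12$ with $H\le L$ and $aba^{-1}\notin L$. Thus $aba^{-1}$ \emph{is} separated from $H$ in $G_0$, and your proposed witness to non-separability does not work. The argument cannot be repaired by choosing a different element in place of $aba^{-1}$ without a genuinely new idea: the lattice of finite-index subgroups of $F_2$ above $H$ is infinite, and many of them are $\Phi^{2m}$-invariant for suitable $m$.

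The paper takes a different route for the non-periodic case. After reducing to UPG it uses induction on the degree of growth (\cref{splitting}) to reduce to the linearly growing case, and there invokes the Cohen--Lustig tree (\cref{linear_splitting}) to exhibit two adjacent $F_m\times\ZZ$ vertex groups whose central directions are independent inside the common $\ZZ^2$ edge group. From this configuration one reads off an embedded copy of the Niblo--Wise poison group $G_{NW}=\langle i,j,k,l\mid[i,j],[j,k],[k,l]\rangle$ (\cref{linear_non_LERF}), which is known not to be LERF. So rather than trying to produce a non-separable subgroup by hand inside the rank-two model $G_0$, the paper imports a ready-made obstruction via a structural embedding; your step~2 reduction to $G_0$ is compatible with this, but the endgame needs to be replaced.
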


Since the property of being LERF passes to subgroups, Theorem~\ref{main} combined with standard results on polynomial subgroups of free-by-cyclic groups (see e.g. \cite[Proposition~1.4]{Levitt}), implies the following corollary:
\begin{corx}\label{cor}
Let $\Phi \in \mathrm{Out}(F_n)$ and let $G = F_n \rtimes_{\Phi} \mathbb{Z}$. \begin{enumerate}
    \item If $\Phi$ acts periodically on every conjugacy class of elements in $F_n$ (equivalently, if $\Phi$ is a finite order outer automorphism) then $G$ is subgroup separable.
    \item  If there exists a conjugacy class $\bar{g}$ in $F_n$ which grows polynomially of order $d > 0$ under the action of $\Phi$ then $G$ is not subgroup separable.
\end{enumerate}
\end{corx}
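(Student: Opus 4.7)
My plan is to derive both parts of Corollary \ref{cor} as formal consequences of Theorem \ref{main}: I will apply Theorem \ref{main} directly to $\Phi$ for part (1), and to the restriction of $\Phi$ to a suitable finitely generated $\Phi$-invariant subgroup of $F_n$ for part (2).

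For part (1), suppose $\Phi$ is of finite order in $\mathrm{Out}(F_n)$. Then every $\Phi$-orbit on conjugacy classes of $F_n$ is finite, so $\Phi$ is trivially polynomially growing (of degree zero) and periodic. The hypotheses of Theorem \ref{main} are therefore met, and its forward direction yields that $G = F_n \rtimes_\Phi \mathbb{Z}$ is subgroup separable. The equivalence between ``$\Phi$ acts periodically on every conjugacy class'' and ``$\Phi$ is of finite order in $\mathrm{Out}(F_n)$'' is standard; one direction is obvious, and the other follows because only finitely many conjugacy classes are needed to determine an outer automorphism, so a common period exists.

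For part (2), suppose some conjugacy class $\bar g \subset F_n$ has polynomial growth of positive degree $d$ under $\Phi$. By the structure theory of polynomially growing conjugacy classes recorded in \cite[Proposition~1.4]{Levitt}, there is a finitely generated, $\Phi$-invariant (up to conjugacy) free subgroup $F \leq F_n$ containing a representative of $\bar g$, on which the restriction $\Psi := \Phi|_F$ is a polynomially growing outer automorphism of $F$. Since $\bar g$ still grows polynomially of positive degree under $\Psi$, this $\Psi$ cannot be periodic. Applying Theorem \ref{main} to the pair $(F, \Psi)$ shows that the subgroup $F \rtimes_\Psi \mathbb{Z} \leq G$ is not subgroup separable, and the fact that LERF passes to subgroups then forces $G$ itself to be non-LERF.

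The substantive step is the appeal to \cite[Proposition~1.4]{Levitt} in part (2): this is what allows one to localize a single polynomially growing conjugacy class to a finitely generated $\Phi$-invariant free subgroup of $F_n$ and thereby bring Theorem \ref{main} to bear, even though $\Phi$ need not be globally polynomially growing under the hypothesis. Everything else is a formal manipulation of Theorem \ref{main} together with the closure of LERF under taking subgroups.
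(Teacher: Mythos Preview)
Your proposal is correct and follows precisely the approach indicated by the paper: the paper does not spell out a proof of Corollary~\ref{cor} beyond the sentence preceding it, which invokes Theorem~\ref{main}, the heredity of LERF to subgroups, and \cite[Proposition~1.4]{Levitt} for the existence of the polynomial subgroup in part~(2). You have simply made that one-line sketch explicit.
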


Let $S$ be a compact surface with non-empty boundary and $[f]$ a pseudo-Anosov mapping class of $S$. The fundamental group of a finite-volume hyperbolic 3-manifold is subgroup separable (see e.g. \cite[Diagram~4]{AFW}). Hence, the fundamental group of the mapping torus $M_f$ of $f$ is a free-by-cyclic group which is LERF. The induced outer automorphism $f_{*}$ of $\pi_1(S) \simeq F_n$ acts periodically on the conjugacy classes of $F_n$ corresponding to the boundary components of $S$, and exponentially on the remaining conjugacy classes. This leads to the following natural question:

\begin{question}\label{qn}
Let $\Phi \in \mathrm{Out}(F_n)$ be an outer automorphism of $F_n$ such that for every conjugacy class of $F_n$, $\Phi$ acts periodically or exponentially, and there exists at least one conjugacy class with each type of growth. Suppose that $G = F_n \rtimes_{\Phi} \mathbb{Z}$ is LERF. Does it follow that $\Phi$ is geometric?
\end{question}

\cref{main} and \cref{qn} do not address the question of which free-by-cyclic groups with \emph{purely exponential} monodromy are subgroup separable. Combining classical results in the literature \cite{Brinkmann00, BestvinaHandel92}, it follows that mapping tori of purely exponential elements in $\mathrm{Out}(F_n)$ are exactly the Gromov hyperbolic free-by-cyclic groups, and thus we ask the following:

\begin{question}
    Which Gromov hyperbolic free-by-cyclic groups are subgroup separable?
\end{question}

Leary--Niblo--Wise construct examples of hyperbolic free-by-cyclic groups which are not subgroup separable, by realising such groups as ascending, non-descending HNN extensions of finitely generated free groups \cite{LNW}. It is interesting to note that whilst the failure of subgroup separability in the Leary--Niblo--Wise examples is due to the non-symmetric nature of the BNS invariant, free-by-cyclic groups with polynomially growing monodromies have symmetric BNS invariants \cite{CashenLevitt2016}.

It is a general fact that if a group $G$ has an integral character $\phi \colon G \to \Z$ which is contained in the BNS invariant, and $-\phi$ is not contained in the BNS invariant, then $G$ is not subgroup separable. We can leverage this fact to prove results about subgroup separability for random groups which admit deficiency 1 presentations.
\begin{thmx}\label{random}
Let $G$ be a random group of deficiency 1 with respect to the few-relator model (see Section~\ref{RandomSection}). Then, with positive asymptotic probability $G$ is not subgroup separable.
\end{thmx}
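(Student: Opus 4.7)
The plan is to reduce the theorem to the general BNS-asymmetry fact recalled just above the statement: it suffices to show that, with positive asymptotic probability, a random deficiency 1 group $G$ admits an integral character $\phi:G\to\Z$ satisfying $\phi\in\Sigma(G)$ and $-\phi\notin\Sigma(G)$.

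First, I would establish that the abelianization is generic. In the few-relator model a random presentation $\langle x_1,\dots,x_n\mid r_1,\dots,r_{n-1}\rangle$ has an $(n-1)\times n$ exponent-sum matrix whose rows are the abelianizations of independent random cyclically reduced words. Standard random-walk and random-matrix estimates show that, as the relator length tends to infinity, this matrix has rank $n-1$ with probability tending to $1$. Hence $H_1(G;\Z)\cong\Z\oplus T$ with $T$ a finite torsion group, yielding a primitive integral character $\phi:G\to\Z$.

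Next, I would analyse membership of $\pm\phi$ in $\Sigma(G)$ via a combinatorial criterion for the BNS invariant of a finite presentation, such as Brown's algorithm or the equivalent characterisation of $\phi\in\Sigma(G)$ by the existence of an ascending HNN splitting of a finitely generated subgroup along $\phi$. For a deficiency 1 presentation this reduces to a condition on the $\phi$-graded combinatorics of the relators: membership of $+\phi$ is controlled by the top $\phi$-syllables and membership of $-\phi$ by the bottom $\phi$-syllables of the relators. Crucially, these two pieces of data depend on essentially disjoint portions of each random cyclically reduced word.

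The heart of the argument, and the main obstacle, is to exhibit a positive-probability event in the few-relator model that simultaneously forces the top criterion to succeed and the bottom criterion to fail (or vice versa). I would construct such an event by prescribing a specific combinatorial pattern on the first and last boundedly many letters of each random relator, dictated explicitly by the combinatorial BNS criterion above. Since each such prescription restricts only boundedly many letters per relator, the resulting event has probability bounded away from $0$ uniformly in the relator length. Combining this asymmetry event with the high-probability event that $H_1(G;\Z)$ has free rank $1$, and invoking the general BNS fact from the introduction, then yields the theorem.
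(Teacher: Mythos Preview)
Your overall strategy---produce with positive probability a character $\phi$ with $\phi\in\Sigma(G)$ and $-\phi\notin\Sigma(G)$, then invoke the ascending-HNN argument---matches the paper. Two of your steps, however, do not go through as described.

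First, the positive-probability event. You propose to prescribe ``the first and last boundedly many letters'' of each relator. But the relevant $\phi$-data are the global $\phi$-minimum and $\phi$-maximum along $C_{r_i}$, and for a random cyclically reduced word of length $l$ these extrema occur at positions that are \emph{not} within bounded distance of the endpoints; conditioning on a prefix and suffix of bounded length does not pin down the structure of $L_{\phi}(r_i)$ or $L_{-\phi}(r_i)$. The paper instead builds an injection $\mathcal R'_l\hookrightarrow\mathcal T\cap\mathcal R'_{l+12}$: given any tuple with $b_1=1$, it inserts a commutator $[x_{n+1},x_i^{\pm}]$ at the actual $\phi$-minimal vertex of each $r_i$ (forcing the unique minimum condition), and a commutator or its square at the $\phi$-maximal vertex (forcing the repeated minimum condition for $-\phi$). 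Injectivity comes from the obvious left inverse that removes these commutators, and the ratio $|\mathcal R'_l|/|\mathcal R'_{l+12}|$ is bounded below by a constant depending only on $n$.

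Second, and more seriously, the criterion for $-\phi\notin\Sigma(G)$. Brown's algorithm is a one-relator phenomenon; for deficiency~$1$ presentations with $n\geq 2$ relators there is no off-the-shelf combinatorial test for \emph{non}-membership in $\Sigma$. The paper establishes $-\phi\notin\Sigma(G)$ via Sikorav's theorem, by showing that the matrix of Fox derivatives (restricted to the first $n$ generators) fails to be invertible over the Novikov ring $\widehat{\mathbb Q G}^{-\phi}$ when the repeated minimum condition holds. This computation requires $\mathbb Q G$ to have no zero divisors, which in turn is obtained by arguing that a random deficiency~$1$ group is almost surely $C'(1/6)$, hence virtually special by Wise and Agol, hence satisfies the Atiyah conjecture by Schreve. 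None of this machinery appears in your sketch, and without it the ``bottom criterion fails'' step has no content.
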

 Kielak--Kropholler--Wilkes show that a random few-relator deficiency 1 group is free-by-cyclic with positive asymptotic probability \cite{KKW}. Our methods for proving Theorem~\ref{random} imply that such a group is \emph{not} free-by-cyclic with positive probability, generalising a result of Dunfield--Thurston \cite{DT} who prove this for 2-generator 1-relator groups. 
\begin{thmx}\label{def1}
Let $G$ be a random group of deficiency 1 with respect to the few-relator model. Then $G$ is free-by-cyclic with asymptotic probability bounded away from 1.
\end{thmx}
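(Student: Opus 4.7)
The plan is to deduce this from the same probabilistic mechanism used in the proof of Theorem~\ref{random}, recycling the construction of an integral character with an asymmetric BNS invariant. Recall that a free-by-cyclic structure on $G$ is precisely a short exact sequence $1 \to F_n \to G \to \ZZ \to 1$ with $F_n$ a finitely generated free group; such a structure is determined by a surjective character $\phi \colon G \to \ZZ$ whose kernel is finitely generated. Two facts will be the key input. First, by Bieri--Neumann--Strebel, the kernel of an integral character $\phi$ is finitely generated if and only if both $\phi$ and $-\phi$ lie in $\Sigma(G)$. Second, if the first Betti number $b_1(G)$ equals $1$, then up to sign there is a unique primitive integral character $\phi \colon G \to \ZZ$, so any free-by-cyclic structure on $G$ must be induced by it.

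The strategy is to show that, with positive asymptotic probability, a random deficiency $1$ group $G$ simultaneously satisfies
\begin{enumerate}[(i)]
    \item $b_1(G) = 1$, and
    \item some primitive character $\phi \colon G \to \ZZ$ has $\phi \in \Sigma(G)$ but $-\phi \notin \Sigma(G)$.
\end{enumerate}
Under (i) and (ii) together, $G$ cannot be free-by-cyclic: by (i) the only candidate splitting character is $\pm\phi$, but by (ii) combined with Bieri--Neumann--Strebel the kernel of $\phi$ is not finitely generated, and in particular is not a free group of finite rank.

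For (i), the abelianisation of a random deficiency $1$ presentation $\langle x_1, \dots, x_k \mid r_1, \dots, r_{k-1}\rangle$ is presented by the $(k-1) \times k$ integer matrix of exponent-sum vectors of the relators, and a standard genericity argument shows that this matrix has maximal rank $k-1$ with asymptotic probability tending to $1$, in which case $b_1(G) = 1$. For (ii), the event is produced verbatim by the probabilistic argument in the proof of Theorem~\ref{random}, and holds with positive asymptotic probability.

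The main obstacle is ensuring (i) and (ii) hold jointly; this is handled by the observation that since (i) has probability tending to $1$ while (ii) has probability bounded below by a positive constant, their intersection still has positive asymptotic probability. Inspecting the construction in the proof of Theorem~\ref{random}, one sees that the asymmetric character $\phi$ produced there is already (up to sign) the primitive character detected by (i), so no additional alignment of the two events is required. This yields the theorem and, as noted in the introduction, extends the result of Dunfield--Thurston \cite{DT} from the $2$-generator $1$-relator setting to the full few-relator deficiency $1$ model.
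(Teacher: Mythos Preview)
Your argument is correct and follows essentially the same route as the paper: combine the almost-sure fact that $b_1(G)=1$ with the positive-probability asymmetry of $\Sigma(G)$ established in Theorem~\ref{non-symmetricity}, and conclude via the Bieri--Neumann--Strebel criterion that the unique (up to sign) integral character cannot fibre. The final paragraph about ``alignment'' is superfluous, since once $b_1(G)=1$ any nonzero character is a scalar multiple of the primitive one and $\Sigma(G)$ is a cone, so the intersection bound you already gave suffices.
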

A random 2-generator 1-relator group is virtually free-by-cyclic, almost surely \cite[Corollary~2.10]{KKW}. The analogous result in the deficiency 1 case is not known. 

\subsection*{Acknowledgments}
I would like to thank my supervisors Martin Bridson and Dawid Kielak for carefully reading earlier drafts of this paper, and for their continuing support and guidance; Naomi Andrew for helpful conversations on polynomially growing free-by-cyclic groups; finally, Sam Hughes and Nicholas Touikan for insightful discussions and correspondence which inspired this project. I'd also like to thank the anonymous referee for many valuable comments and suggestions.

This work was supported by an Engineering and Physical Sciences Research Council studentship (Project Reference 2422910).


\section{Background}

\subsection{Growth of free group automorphisms}

Let $F$ be a finite rank free group and fix a free set of generators $S$ of $F$. For any $g\in F$, we denote by $|g|$ the length of the reduced word representative of $g$. We write $|\bar{g}|$ to denote the minimal length of a cyclically reduced word representing a conjugate of $g$. 

An outer automorphism $\Phi \in \mathrm{Out}(F)$ acts on the set of conjugacy classes of elements in $F$. Given a conjugacy class $\bar{g}$ of an element $g\in F$, we say that \emph{$\bar{g}$ grows polynomially of degree $d$ under the iteration of $\Phi$,} if there exist constants $C_1, C_2 > 0$ such that for all $n\geq 1$, \[C_1 n^d \leq |\Phi^n(\bar{g})| \leq C_2 n^d.\] 

We say that $\Phi$ \emph{grows polynomially of degree $d$} if every conjugacy class of elements of $F$ grows polynomially of degree $\leq d$ under the iteration of $\Phi$, and there exists a conjugacy class which grows polynomially of degree exactly $d$.

Let $\Gamma$ be a graph. We will assume that every map $f \colon \Gamma \to \Gamma$ 
sends vertices to vertices, and edges to immersed non-trivial edge paths. Given an edge path $\gamma$ in $\Gamma$, we write $|\gamma|$ to denote the minimal simplicial length of an edge path in the homotopy class of $\gamma$, rel. endpoints.  We define polynomial growth of degree $d$ of an edge path $\gamma$ in $\Gamma$ under the iteration of $f$ analogously to the definition of the growth of a conjugacy class. Similarly for the growth of the map $f$.

If $\Phi$ is an UPG outer automorphism of $F$ with growth of order $d > 0$ then the growth of any improved relative train track representative of $\Phi$ is polynomial of order $d$ (see e.g. \cite[Lemma~2.3]{AHK}). Note however that it is possible for a linearly growing improved relative train track map to represent the identity outer automorphism.

We collect two results which will be useful in the sequel. Both can be found in the article of Levitt \cite[Theorem~1, Lemma~1.7]{Levitt}, though we note that they can be deduced without too much effort from the (improved) relative train track machinery.
\begin{lemma}\label{Levitt}
Let $\Phi \in \mathrm{Out}(F_n)$ be an outer automorphism which grows polynomially of degree $d$. Then, the following hold:
\begin{itemize}
    \item $d \leq n-1$;
    \item $d = 0$ if and only if $\Phi$ has finite order in $\mathrm{Out}(F_n)$.
\end{itemize}
\end{lemma}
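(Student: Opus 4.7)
The plan is to work with an improved relative train track (IRTT) representative in the sense of Bestvina--Feighn--Handel. After replacing $\Phi$ by a suitable power $\Phi^k$ so that it becomes unipotent polynomially growing (UPG), I take an IRTT representative $f\colon G \to G$ on a marked graph with $\pi_1(G) \cong F_n$ whose filtration $G_0 \subset G_1 \subset \cdots \subset G_K = G$ consists only of linear strata: each $G_i \setminus G_{i-1}$ is a single edge $E_i$ with $f(E_i) = E_i \cdot u_i$, where $u_i$ is a closed (possibly trivial) path in $G_{i-1}$ based at an endpoint of $E_i$, and both endpoints of $E_i$ lie in $G_{i-1}$.

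Next I would define the degree of each edge recursively by $\deg(E_i) = 0$ if $u_i = 1$, and $\deg(E_i) = 1 + \max\{\deg(E_j) : E_j \text{ occurs in } u_i\}$ otherwise. A straightforward induction on $i$ using the identity $f^n(E_i) = E_i \cdot u_i \cdot f(u_i) \cdots f^{n-1}(u_i)$ shows that each $E_i$ grows polynomially of degree exactly $\deg(E_i)$, and consequently the polynomial growth degree of $\Phi$ equals $d = \max_i \deg(E_i)$.

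For the bound $d \le n-1$, call $E_i$ \emph{growing} if $u_i \neq 1$ and \emph{fixed} otherwise. Since both endpoints of a growing edge lie in $G_{i-1}$, adding such an edge raises the first Betti number by exactly $1$. If $d > 0$, picking an edge $E_{i_d}$ of degree $d$ and recursively selecting an edge of maximal degree appearing in the defining loop produces a chain of distinct growing edges $E_{i_1}, \ldots, E_{i_d}$ of degrees $1, \ldots, d$, with $u_{i_1}$ a non-trivial loop supported on the fixed subgraph. Hence the fixed subgraph contributes $b_1 \ge 1$, yielding $n = b_1(G) \ge 1 + d$. For the second bullet, one direction is immediate; conversely, if $d = 0$ then $u_i = 1$ for every $i$, so $f(E_i) = E_i$, i.e. $f = \mathrm{id}_G$, forcing $\Phi^k = 1$ in $\mathrm{Out}(F_n)$ and hence $\Phi$ to have finite order. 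The main obstacle is invoking the correct form of the IRTT structure theorem in the UPG case, in particular the claim that every growing linear edge is rank-increasing; this will lean on the full Bestvina--Feighn--Handel machinery rather than on an elementary combinatorial argument.
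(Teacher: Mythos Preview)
The paper does not prove this lemma at all: it simply records the two assertions and cites Levitt \cite[Theorem~1, Lemma~1.7]{Levitt}, remarking that they ``can be deduced without too much effort from the (improved) relative train track machinery.'' Your proposal is thus an attempt to flesh out exactly the alternative route the paper alludes to. The overall strategy is sound, but there are two genuine gaps.

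\medskip
\noindent\textbf{The equivalence $d=0 \Leftrightarrow$ finite order.} You assert that the growth degree of $\Phi^k$ equals $\max_i \deg(E_i)$ for the chosen IRTT, and then conclude from $d=0$ that every $u_i$ is trivial. But the paper itself warns, just above the lemma, that ``it is possible for a linearly growing improved relative train track map to represent the identity outer automorphism.'' In other words, one can have $\Phi^k=\mathrm{id}$ (so $d(\Phi)=0$) while some $u_i\neq 1$ in the chosen IRTT. Only the inequality $d(\Phi)\le \max_i\deg(E_i)$ is automatic; the reverse need not hold when $d(\Phi)=0$. Your argument for $d=0\Rightarrow$ finite order is therefore circular as written. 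A correct argument here does not go through the IRTT: if every conjugacy class has bounded orbit then, since there are only finitely many conjugacy classes of each bounded length, each class is periodic; applying this to the classes $[a_i]$ and $[a_ia_j]$ for a basis forces some power of $\Phi$ to be inner.

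\medskip
\noindent\textbf{The bound $d\le n-1$.} The sentence ``since both endpoints of a growing edge lie in $G_{i-1}$, adding such an edge raises the first Betti number by exactly $1$'' is not correct: having both endpoints in $G_{i-1}$ only gives a rank increase when they lie in the \emph{same component} of $G_{i-1}$, and nothing in the general IRTT axioms forces this (nor does it force the initial vertex of $E_i$ to lie in $G_{i-1}$ at all). You flag this as ``the main obstacle'' and defer to ``the full Bestvina--Feighn--Handel machinery,'' but there is no specific IRTT property that asserts growing NEG edges are rank-increasing. One clean repair is to use that a UPG outer automorphism admits an upper-triangular representative on a rose (a basis $a_1,\dots,a_n$ with $\Phi^k(a_i)=a_i w_i$, $w_i\in\langle a_1,\dots,a_{i-1}\rangle$); on the rose every edge is a loop, so your chain-of-degrees argument goes through verbatim and yields $n\ge d+1$. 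Alternatively one can run an induction on the filtration $G^{(0)}\subset G^{(1)}\subset\cdots\subset G^{(d)}$ by degree, but this also needs more care than your sketch provides.
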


\subsection{Group rings and the Bieri--Neumann--Strebel invariant}

Let $G$ be a group and $\phi \colon G \to \mathbb{Z}$ a homomorphism. The \emph{Novikov ring} $\widehat{\mathbb{Q}G}^{\phi}$ of $G$ with respect to $\phi$, is the set of all formal sums $x = \sum_{g\in G} {\lambda}_g g$ where $\lambda_g \in \mathbb{Q}$, such that for any $r \in \mathbb{R}$, the intersection $\mathrm{supp}(x) \cap \phi^{-1}((-\infty, r])$ is a finite set. Multiplication and addition in $\widehat{\mathbb{Q}G}^{\phi}$ are defined in the obvious way, so that the natural inclusion $\mathbb{Q}G \leq \widehat{\mathbb{Q}G}^{\phi}$ is an embedding of rings.

\begin{lemma}\label{units}
Let $G$ be group and $\phi \colon G \to \mathbb{Z}$ a homomorphism. Then, for every infinite-order element $g\in G$ and $\alpha \in \mathbb{Q}^{\times}$, $g- \alpha$ is a unit in $\widehat{\mathbb{Q}G}^{\phi}$ if and only if $\phi(g)\neq 0$.
\end{lemma}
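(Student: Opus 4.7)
The plan is to handle the two directions of the biconditional separately. For the forward direction ($\phi(g) \neq 0$) I would write down an explicit geometric-series inverse; for the converse ($\phi(g) = 0$) I would use a layer-by-layer analysis to reduce the question to showing that $g - \alpha$ is not a unit in the ordinary group ring $\mathbb{Q}[\ker \phi]$, and then finish with a free-module argument over $\mathbb{Q}[\langle g \rangle]$.

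Suppose first that $\phi(g) \neq 0$. By symmetry I may assume $\phi(g) > 0$ (the case $\phi(g) < 0$ is analogous, starting from the factorization $g - \alpha = g(1 - \alpha g^{-1})$). Writing
\[
g - \alpha \;=\; -\alpha\bigl(1 - \alpha^{-1} g\bigr),
\]
the candidate series
\[
\sum_{n \geq 0} \alpha^{-n} g^n
\]
lies in $\widehat{\mathbb{Q}G}^{\phi}$ because its support $\{g^n : n \geq 0\}$ satisfies $\phi(g^n) = n\phi(g) \to +\infty$, so it meets each set $\phi^{-1}((-\infty, r])$ in finitely many elements. A standard telescoping computation confirms this series inverts $1 - \alpha^{-1} g$, so $g - \alpha$ is a unit.

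For the converse, suppose $\phi(g) = 0$ and assume for contradiction that some $y \in \widehat{\mathbb{Q}G}^{\phi}$ satisfies $(g - \alpha) y = 1$. I would decompose $y = \sum_n y_n$ into its $\phi$-homogeneous layers, where $y_n$ has support in $\phi^{-1}(n)$; the Novikov condition forces each $y_n$ to be a finite $\mathbb{Q}$-linear combination and the set of $n$ with $y_n \neq 0$ to be bounded below. Because $g - \alpha$ is supported in the layer $\phi = 0$, multiplication by $g - \alpha$ preserves layers, and the $\phi = 0$ component of $(g - \alpha) y = 1$ gives
\[
(g - \alpha) y_0 \;=\; 1 \quad \text{in } \mathbb{Q}[K],
\]
where $K := \ker \phi$.

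It therefore suffices to show that $g - \alpha$ is not a unit in $\mathbb{Q}[K]$. Set $R = \mathbb{Q}[\langle g \rangle] \cong \mathbb{Q}[t, t^{-1}]$, with $t$ corresponding to $g$; since $g$ has infinite order, $\langle g \rangle$ embeds in $K$, and any choice of right coset representatives of $\langle g \rangle$ in $K$ exhibits $\mathbb{Q}[K]$ as a free left $R$-module. Commutativity of $R$ ensures that left multiplication by $t - \alpha$ is a left $R$-module endomorphism of $\mathbb{Q}[K]$ acting on each free summand as multiplication by $t - \alpha$. Because $\alpha \neq 0$, the element $t - \alpha$ is not a unit of $\mathbb{Q}[t, t^{-1}]$ (whose units are nonzero scalar monomials), so $(t - \alpha) R$ is a proper ideal and $1 \notin (g - \alpha) \mathbb{Q}[K]$, contradicting the existence of $y_0$. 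I expect the main subtlety to be the layer-extraction step reducing the Novikov problem to $\mathbb{Q}[K]$; once that reduction is in place, the free-module argument is a short and elementary conclusion.
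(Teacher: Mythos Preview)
Your proof is correct. The forward direction is essentially identical to the paper's: both write down the geometric series inverse and note the Novikov support condition is met because $\phi(g^n)\to\infty$.

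For the converse the paper takes a different, more bare-hands route. Rather than passing to layers, it works directly with an alleged inverse $h=\sum_{k}\lambda_k k$ and reads off the recursion forced by comparing coefficients along the cyclic subgroup $\langle g\rangle$: one gets $\lambda_{g^n}=\alpha^{-n}\lambda_{1}$ for $n>0$ and $\lambda_{g^{-n}}=\alpha^{n-1}(\alpha\lambda_{1}+1)$ for $n>0$, so at least one of the two tails is entirely nonzero; since all these elements lie in $\phi^{-1}(0)$ and $g$ has infinite order, this violates the Novikov finiteness condition. Your argument instead first extracts the $\phi$-degree~$0$ layer to reduce to invertibility in $\mathbb{Q}[K]$, and then exploits that $\mathbb{Q}[K]$ is free as a left module over the commutative ring $\mathbb{Q}[\langle g\rangle]\cong\mathbb{Q}[t,t^{-1}]$, where $t-\alpha$ is visibly a non-unit. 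The paper's computation is shorter and entirely self-contained; your approach is more structural and makes transparent why the statement holds---it would apply verbatim with $g-\alpha$ replaced by any element of $\mathbb{Q}[\langle g\rangle]$ that is not a unit in $\mathbb{Q}[t,t^{-1}]$.
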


\begin{proof}

Suppose $\phi(g) \neq 0$. If $\phi(g) > 0$ then the formal sum \[h = \alpha^{-1}\cdot\sum_{i = 0}^{\infty} (\alpha^{-1}g)^i\] is an element of $\widehat{\mathbb{Q}G}^{\phi}$, and $(\alpha - g)h = h(\alpha -g) = 1$. If $\phi(g) < 0$, then $\phi(g^{-1}) > 0$ and since $g$ is a unit in $\widehat{\mathbb{Q}G}^{\phi}$, it follows that $g-\alpha = \alpha g (\alpha^{-1}-g^{-1})$ is also a unit.

Suppose that $\phi(g) = 0$. For contradiction, assume that there exists some $h \in \widehat{\mathbb{Q}G}^{\phi}$ such that $(g-\alpha)h = 1$. Write $h = \sum_{k\in G} \lambda_k k$, where the coefficients $\lambda_k \in \mathbb{Q}$ are such that for any $r \in \mathbb{R}$, there are only finitely many elements $k \in G$ with $\phi(k) \leq r$ and $\lambda_k \neq 0$. Since $(g-\alpha)h = 1$, we have that for all $n > 0$, $\lambda_{g^n} = \alpha^{-n} \cdot \lambda_{1_G}$ and $\lambda_{g^{-n}} = \alpha^{n-1}(\alpha\cdot \lambda_{1_G} + 1)$. Hence $\lambda_{g^n} \neq 0$ for all $n > 0$, or $\lambda_{g^{-n}} \neq 0$ for all $n > 0$. However $\phi(g^{n}) = n \cdot \phi(g) = 0$ for all $n \in \mathbb{Z}$. Since $g \in G$ has infinite order, it follows that $\mathrm{supp}(h) \cap \phi^{-1}((-\infty, 0 ])$ is infinite. This is a contradiction.\end{proof}

The significance of the Novikov ring lies in its relation to the Bieri--Neumann--Strebel invariant of a group $G$. 

\begin{defn}\cite{BNS}
The \emph{Bieri--Neumann--Strebel invariant} (also known as the \emph{BNS invariant}) $\Sigma(G)$ of a group $G$, is the set of non-zero homomorphisms $\phi\colon G \to \mathbb{R}$ such that the monoid $\{g\in G \mid \phi(g) \geq 0\}$ is finitely generated.
\end{defn}

The original version of the following theorem, where the coefficient ring is equal to $\widehat{\mathbb{Z}G}^{\phi},$ is attributed to Sikorav and can be found in his PhD thesis \cite{Sikorav}. The proof of the result over $\mathbb{Q}$ is outlined in \cite[Theorem~3.11]{Kielak}.

\begin{thm}\label{Sikorav}
Let $G$ be a finitely generated group and $\phi \colon G \to \mathbb{Z}$ an epimorphism. Then $\phi$ is an element of the BNS invariant $\Sigma(G)$ of $G$ if and only if $H_1(G; \widehat{\mathbb{Q}G}^{\phi}) = 0$.
\end{thm}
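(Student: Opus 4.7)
My plan is to reduce the statement, via a finite presentation of $G$, to a linear-algebraic condition over the Novikov ring, and then to match this condition with the combinatorial description of $\Sigma(G)$ in terms of connectedness of half-spaces in the Cayley graph. Throughout I will use the fundamental equivalence (the Brown/BNS criterion) that $\phi\in \Sigma(G)$ iff for some (equivalently every) finite generating set $S$, the full subgraph of $\mathrm{Cay}(G,S)$ spanned by the vertices in $\phi^{-1}([0,\infty))$ is connected. I will take this as the working definition of $\Sigma(G)$, since it is the form most amenable to a chain-level argument.

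Fix a finite presentation $G=\langle x_1,\dots,x_n \mid r_1,r_2,\dots \rangle$ (possibly with infinitely many relators). The associated presentation $2$-complex yields the start of a free $\mathbb{Q}G$-resolution of $\mathbb{Q}$, whose first two boundary maps are $\partial_1=(x_i-1)_i$ and $\partial_2=(\partial r_j / \partial x_i)_{j,i}$ given by Fox derivatives. Tensoring with $\widehat{\mathbb{Q}G}^{\phi}$ computes $H_*(G;\widehat{\mathbb{Q}G}^{\phi})$, and by Lemma~\ref{units}, since some $x_i$ has $\phi(x_i)\ne 0$, the map $\partial_1$ is surjective after tensoring; thus $H_0$ vanishes automatically and all the content lies in $H_1$.

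For the direction ($\phi\in\Sigma(G)\Rightarrow H_1=0$): I want to show every $1$-cycle $(a_i)\in (\widehat{\mathbb{Q}G}^{\phi})^n$ with $\sum a_i(x_i-1)=0$ is a $\widehat{\mathbb{Q}G}^{\phi}$-combination of Fox derivatives of relators. Concretely, each element of the form $g-1$ with $g\in G$ can, using finitely many defining relations, be rewritten modulo $\mathrm{im}\,\partial_2$ as a $\mathbb{Z}G$-combination $\sum c_i(x_i-1)$ where \emph{every prefix of the representing word lies in $\phi^{-1}([0,\infty))$}; this is the algebraic shadow of connectedness of the positive half Cayley graph. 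Applying this rewriting termwise to the support of a cycle (which by the Novikov condition is bounded below in $\phi$) and iterating, one pushes the $\phi$-support strictly to $+\infty$, so the infinite sum of corrections converges in $\widehat{\mathbb{Q}G}^{\phi}$ and realises the cycle as a boundary.

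For the converse ($H_1=0\Rightarrow \phi\in\Sigma(G)$): the contrapositive is easier. Assume the positive half Cayley graph $\Gamma_+$ is disconnected; then there is a partition of its vertex set into clopen pieces and, choosing a generator $x_i$ whose edge crosses the partition, one builds a $1$-cycle (essentially the indicator-function cycle of one side) that cannot be filled by any finite or Novikov-summable combination of relator-boundaries, since every relator is a finite loop and hence stays on one side up to a compact error. Formalising this produces a nonzero class in $H_1(G;\widehat{\mathbb{Q}G}^{\phi})$, contradicting the assumption.

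The hard part is the rewriting step in the forward direction: one must verify that the corrections have shrinking $\phi$-support so that the formal sum converges in $\widehat{\mathbb{Q}G}^{\phi}$, and that passing from finitely presented to merely finitely generated $G$ does not break the argument (the relators used are finite in number for any fixed finite support, so the argument localises). The subtlety is genuinely about completion: working over $\mathbb{Q}$ rather than $\mathbb{Z}$ is convenient only in that divisions by leading coefficients of the form $\alpha-g$, $\phi(g)>0$, are allowed, which is precisely what Lemma~\ref{units} provides.
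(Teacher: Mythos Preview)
The paper does not prove this theorem at all; it quotes it as a known result, attributing the $\mathbb{Z}$-coefficient version to Sikorav's thesis and the $\mathbb{Q}$-coefficient version to \cite[Theorem~3.11]{Kielak}. So there is no in-paper argument to compare against, and the relevant question is simply whether your sketch stands on its own.

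Your forward direction follows the standard Sikorav strategy and is broadly correct, though two points deserve care. First, connectedness of $\phi^{-1}([0,\infty))$ alone does not let you \emph{strictly} raise the minimum $\phi$-value; you need the stronger (but equivalent) fact that every half-space $\phi^{-1}([r,\infty))$ is connected, so that elements sufficiently far up can be joined by paths that stay strictly above the current floor. Second, you should note that the Fox-derivative corrections introduced at each stage have uniformly bounded word-length (depending only on the finite generating set and the relators used), which is what guarantees Novikov convergence of the infinite sum.

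Your reverse direction, however, has a genuine gap. The ``indicator-function cycle of one side'' is not an element of the chain module: a component $A$ of $\Gamma_+$ typically contains infinitely many vertices at each fixed $\phi$-level, so neither $\sum_{g\in A} g$ nor the sum over edges crossing the partition lies in $\widehat{\mathbb{Q}G}^{\phi}$. The observation that each relator ``stays on one side up to a compact error'' is likewise not usable without a well-defined cycle to pair it against. A workable route for this implication is the contrapositive in the other form: assume $H_1(G;\widehat{\mathbb{Q}G}^{\phi})=0$, use exactness to produce a $\widehat{\mathbb{Q}G}^{\phi}$-linear section of $\partial_1$ on $\ker(\mathrm{aug})$, and then read off from the lowest-$\phi$ terms of that section a finite collection of words in the generators, with all prefixes in $\phi^{-1}([0,\infty))$, that reach every generator; this yields finite generation of the monoid $\{g:\phi(g)\ge 0\}$ and hence $\phi\in\Sigma(G)$. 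That is closer to how the cited proofs proceed, and it avoids the ill-defined infinite sums in your sketch.
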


The non-symmetric nature of the BNS invariant provides a useful criterion for detecting when a group is not subgroup separable. 

\begin{lemma}\label{separability}
If the set $\Sigma(G) \cap H^1(G; \mathbb{Z})$ of integral characters of $G$ which are contained in the BNS invariant is non-symmetric under the antipodal involution, then $G$ is not subgroup separable. 
\end{lemma}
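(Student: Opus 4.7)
The plan is to show that under the hypothesis, the canonical finitely generated subgroup in the ascending HNN decomposition of $G$ induced by $\phi$ is not separable, contradicting LERF.

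Since $\Sigma(G)$ is invariant under positive rescaling, after dividing by the positive generator of $\phi(G)$ I may assume $\phi \colon G \to \ZZ$ is surjective. By the Bieri--Strebel characterization of integral points of the BNS invariant, $\phi \in \Sigma(G)$ is equivalent to $G$ admitting a presentation as an ascending HNN extension
\[ G = \langle B, t \mid tbt^{-1} = \psi(b),\ b \in B \rangle \]
with $B$ finitely generated, $\psi \colon B \to B$ an injective endomorphism, and $\phi(t)=1$. Since $\{\pm\phi\} \subseteq \Sigma(G)$ would force $\ker\phi$ to be finitely generated, the hypothesis $-\phi \notin \Sigma(G)$ rules out $\psi$ being surjective, so $\psi(B) \subsetneq B$.

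Set $B_n := t^{-n} B t^n$, giving an ascending chain $B_0 \subseteq B_1 \subseteq \cdots$ whose union is $\ker\phi$ and which does not stabilise. Fix any $g \in B_1 \setminus B$. I claim that $B$ is not separable in $G$. Let $K \leq G$ be any finite-index subgroup containing $B$, take its normal core $N$ (also of finite index, since $G$ is finitely generated), and let $\pi \colon G \to Q = G/N$ denote the quotient map. The key observation is that $\pi(B_n) = \pi(t)^{-n}\,\pi(B)\,\pi(t)^n$ is conjugate to $\pi(B)$ inside the finite group $Q$, and so has the same cardinality as $\pi(B)$; since the subgroups $\pi(B_n)$ form an ascending chain of equal-order subgroups of $Q$, they must all coincide with $\pi(B)$. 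In particular $\pi(g) \in \pi(B) \subseteq \pi(K)$, so $g \in KN = K$.

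Thus $g$ lies in every finite-index subgroup of $G$ containing $B$, despite $g \notin B$, showing that $B$ is not separable in $G$, and hence $G$ is not LERF. The only non-routine ingredient is the cardinality/conjugacy observation in the finite quotient, which I do not expect to present any real difficulty; the rest is bookkeeping around the standard characterisation of integral points of $\Sigma(G)$.
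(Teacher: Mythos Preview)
Your proof is correct and follows essentially the same route as the paper. Both arguments invoke the Bieri--Neumann--Strebel result (Proposition~4.1 in \cite{BNS}) to realise $G$ as a strictly ascending HNN extension over a finitely generated base, and then use the standard conjugacy-in-finite-quotients argument (which the paper cites as \cite[Proposition~4]{LNW} and you spell out directly) to show the base is not separable.
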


\begin{proof}
Let $\phi \in \Sigma(G) \cap H^1(G; \mathbb{Z})$ be such that $ \phi \not \in -\Sigma(G)$. Proposition~4.1 in \cite{BNS} implies that there exists a finitely generated subgroup $A \leq G$ and an injective, non-surjective endomorphism $\theta \colon A \to A$, such that $G \simeq A \ast_{\theta}$. A standard argument (see e.g. \cite[Proposition 4]{LNW}) shows that if $G$ contains subgroups $B \leq A$ which are conjugate in $G$, then $B$ cannot be separated from any $g\in A \setminus B$ in any finite quotient of $G$. Hence $\mathrm{im}(\theta)$ is a non-separable subgroup of $G$.
\end{proof}

\subsection{Random groups}\label{RandomSection}

Let $k \in \mathbb{Z}$. A \emph{deficiency $k$ presentation} is a presentation of the form 
\[\langle x_1, \ldots, x_{m} \mid r_1, \ldots, r_n \rangle,\] where $m -n = k$, and $r_1, \ldots, r_n$ are non-empty reduced words in the alphabet $\{x_1^{\pm}, \ldots, x_m^{\pm}\}$. A group $G$ is said to be of \emph{deficiency $k$} if it admits a deficiency $k$ presentation and it does not admit a deficiency $k'$ presentation, for any $k' \geq k$. 

In this article, we will use the few-relator model for random groups. After fixing $n \geq 1 $ and $m \geq 1$, and for every $l \geq 1$, we write $\mathcal{R}_l$ to denote the set of group presentations of the form $\langle x_1, \ldots, x_m \mid r_1, \ldots, r_n \rangle$, where each $r_i$ is a cyclically reduced non-empty word in the alphabet $\{x_1^{\pm}, \ldots, x_m^{\pm}\}$ of length $\leq l$. For any given property $P$ of groups, we say that a presentation \emph{satisfies the property $P$} if the corresponding group satisfies it. The property $P$ is said to hold with \emph{asymptotic probability $p$}, for some $0 \leq p \leq 1$, if 
\[\frac{\#\{ \text{presentations in }\mathcal{R}_l \text{ which satisfy P}\}}{\#\mathcal{R}_l} \to p \text{ as }l \to \infty.\]
The property $P$ is said to hold with \emph{positive asymptotic probability} if 
\[\mathrm{lim \, inf}_{l \to \infty} \frac{\#\{ \text{presentations in }\mathcal{R}_l \text{ which satisfy P}\}}{\#\mathcal{R}_l} > 0. \] The probability is said to be \emph{bounded away from 1} if it holds with asymptotic probability $p <1$. Finally, we say that the property $P$ holds \emph{almost surely} if it holds with asymptotic probability $p = 1$.

A random presentation on $m$ generators and $n$ relators, with $m -n = k$, will correspond to a deficiency $k$ group, almost surely \cite{Wilkes19}. Hence, it makes sense to talk of a random deficiency $k$ group. 

\section{Polynomially growing automorphisms}

An outer automorphism $\Phi \in \mathrm{Out}(F_n)$ is said to be \emph{unipotent polynomially growing} (abbreviated to \emph{UPG}), if it is polynomially growing and its image in $\mathrm{GL}(n, \Z)$ is unipotent. It is a well-known fact that every polynomially growing $\Phi \in \mathrm{Out}(F_n)$ has a positive power $\Phi^k$ which is UPG.

The aim of this section is to prove that mapping tori of polynomially growing outer automorphisms are non-subgroup separable, unless the outer automorphism has finite order. We will reduce the problem to studying \emph{linearly growing UPG} outer automorphisms. The mapping tori of such automorphisms are analogous to fundamental groups of graph manifolds. More precisely, 

\begin{prop}\label{linear_splitting}
    Let $\Phi \in \mathrm{Out}(F_n)$ be a linearly growing UPG element and $G = F_n \rtimes_{\Phi} \Z$. There exists a simplicial $G$-tree $T$ with maximal $\Z^2$ edge stabilisers and maximal $F_m \times \Z$ vertex stabilisers, where $2 \leq m \leq n$. 
\end{prop}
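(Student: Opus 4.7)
The plan is to pass from the Dehn-twist structure of $\Phi$ on a cyclic splitting of $F_n$ to an induced graph-of-groups splitting of the mapping torus $G$, and then refine to achieve the stated rank and maximality conditions.

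First, I would invoke the fact that, since $\Phi$ is linearly growing and UPG, it is a Dehn twist automorphism in the sense of Cohen--Lustig: there exists a graph-of-groups splitting $\Lambda$ with $\pi_1(\Lambda) \cong F_n$, finitely generated free vertex groups $F_v$, and infinite cyclic edge groups $\langle z_e \rangle$, together with a representative $\phi \in \mathrm{Aut}(F_n)$ of $\Phi$ that preserves $\Lambda$, fixes each edge generator $z_e$ pointwise, and restricts to an inner automorphism of each $F_v$. This is essentially the content of \cite[Proposition~5.2.2]{NM}, and also follows from the improved relative train track theory for linearly growing automorphisms.

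Second, I would form the mapping torus of $\phi$ along $\Lambda$ to obtain a graph of groups $\Lambda'$ with $\pi_1(\Lambda') \cong G$, having the same underlying graph as $\Lambda$. Its vertex groups are $F_v \rtimes_\phi \langle t \rangle$ and its edge groups are $\langle z_e, t \rangle$, where $t$ denotes the stable letter. Because $\phi$ fixes $z_e$ and $z_e$ has infinite order, each edge group is isomorphic to $\Z^2$; because $\phi|_{F_v}$ is conjugation by some $\omega_v \in F_v$, the substitution $t' := t \omega_v^{-1}$ exhibits the vertex group as $F_v \times \langle t' \rangle \cong F_v \times \Z$. The Bass--Serre tree of $\Lambda'$ is then the sought-after simplicial $G$-tree $T$.

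Finally, to guarantee $m \geq 2$ at every vertex and maximality of the stabilisers, I would adjust the splitting. Starting from a reduced cyclic splitting, I would collapse edges adjacent to any vertex with $F_v \in \{1, \Z\}$; this is compatible with the free structure of the remaining vertex groups provided the edge generators are primitive in the adjacent free groups, which can be arranged in a sufficiently reduced Dehn-twist splitting. For maximality, I would choose each $z_e$ to be root-closed in $F_n$ (so that $\langle z_e, t \rangle$ is a maximal $\Z^2$ in $G$) and take a coarsest refinement of $\Lambda$ satisfying the above. The main obstacle is achieving all of these properties simultaneously: collapsing to eliminate low-rank vertex groups could shrink vertex stabilisers, while refining to maximise edge stabilisers could reintroduce low-rank vertex groups; the resolution will likely require a JSJ-style analysis to identify a canonical splitting of $G$.
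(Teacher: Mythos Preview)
Your approach and the paper's are fundamentally the same: both invoke the Cohen--Lustig Dehn-twist description of linearly growing UPG automorphisms and pass to the induced splitting of the mapping torus. The paper, however, does not actually argue anything here; it simply records that the proposition follows from the Parabolic Orbits Theorem of Cohen--Lustig together with \cite[Theorem~2.4.9]{AndrewMartino2022}. Those references do the work you attempt in your third step: the Parabolic Orbits Theorem produces an \emph{efficient} (normal-form) Dehn-twist representative in which edge generators are root-closed and no vertex group is cyclic or trivial, and Andrew--Martino translate this into precisely the maximality statement for the stabilisers in the $G$-tree.

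The gap you flag at the end is therefore genuine if you start from an arbitrary Dehn-twist representative, and your proposed fix by ad hoc collapsing and refining is both unnecessary and slightly confused (collapsing edges enlarges vertex groups rather than shrinking them, and taking roots of edge generators does not reintroduce low-rank vertices). Rather than reaching for a JSJ-style argument, the clean resolution is exactly what the paper does: cite the efficient Dehn-twist normal form, where these issues are already handled.
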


The above proposition follows from the Parabolic Orbits Theorem in \cite{CohenLustig1995, CohenLustig1999} (see also \cite[Theorem~2.4.9]{AndrewMartino2022} and the discussion which follows it).

Inspired by the work of Niblo--Wise \cite{NW} on subgroup separability of graph manifolds, we will show show that the mapping torus of every linearly growing UPG element $\Phi \in \mathrm{Out}(F_n)$ contains a non-subgroup separable ``poison'' subgroup. Since subgroup separability is a property which passes to subgroups, this will be enough to conclude that $F_n \rtimes_{\Phi} \Z$ is not subgroup separable.

Our poison subgroup arises as the fundamental group of a link compliment given by the presentation
\[G_{NW} = \langle i,j, k,l \mid [i,j],[j,k], [k,l] \rangle.\]
Niblo--Wise proved that it is not subgroup separable in \cite[Theorem~1.2]{NW}, building on the work of Burns--Karrass--Solitar \cite{BKS}.

The proof of the following proposition is strongly inspired by the argument used to prove Lemma~4.1 and Theorem~2.1 in \cite{NW}.

\begin{prop}\label{linear_non_LERF}
    Let $\Phi \in \mathrm{Out}(F_n)$ be a linearly growing UPG element and $G = F_n \rtimes_{\Phi} \Z$. The group $G_{NW}$ embeds as a subgroup of $G$.
\end{prop}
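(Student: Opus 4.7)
\begin{claimproof}
By Proposition~\ref{linear_splitting}, $G$ acts on a simplicial tree $T$ with maximal $\ZZ^2$ edge stabilisers and maximal $F_m \times \ZZ$ vertex stabilisers ($m \geq 2$). The plan is to find a path of length three in $T$, $v_0 - e_1 - v_1 - e_2 - v_2 - e_3 - v_3$, and to show that its three edge stabilisers $E_1, E_2, E_3 \cong \ZZ^2$ generate a copy of $G_{NW}$ inside $G$. This mirrors the Niblo--Wise construction, where $G_{NW}$ is realised via three Seifert pieces glued along a chain of two tori.

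Two structural facts drive the argument. First, $Z(G) = 1$: if $z = f t^k \in Z(G)$ with $f \in F_n$, then commuting with $F_n$ forces $\Phi^k(g) = f^{-1}gf$ for every $g \in F_n$, making $\Phi^k$ inner, which contradicts the positive linear growth of $\Phi$ unless $k = 0$; the residual case gives $z \in Z(F_n) = 1$. Second, I would show one can find adjacent vertices $v_1, v_2 \in T$ with $Z(V_{v_1}) \cap Z(V_{v_2}) = \{1\}$. If no such pair existed, transitivity of commensurability of cyclic subgroups combined with connectivity of $T/G$ and cocompactness of the action would produce a common non-trivial power lying in every vertex-centre, hence in $Z(G) = 1$, contradicting that the centres are infinite cyclic.

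Fix such $v_1, v_2$ joined by an edge $e_2$ with stabiliser $E_2$. By maximality of $E_2$ as a $\ZZ^2$ in $G$, both centres $Z(V_{v_1}), Z(V_{v_2})$ lie in $E_2$ and now span a finite-index sublattice of $E_2 \cong \ZZ^2$. Next I pick further edges $e_1$ at $v_1$ and $e_3$ at $v_2$ whose stabilisers differ from $E_2$ as subgroups of $V_{v_1}$ and $V_{v_2}$ respectively. Such edges exist because $[V_{v_i}:E_2]$ is infinite and maximal $\ZZ^2$ subgroups of $F_m \times \ZZ$ are self-normalising (since primitive elements of $F_m$ have self-centralising cyclic subgroups), so distinct cosets of $E_2$ yield edges with distinct stabilisers. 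Since the maximal $\ZZ^2$ subgroups of $F_m \times \ZZ$ have the form $\langle f \rangle \times Z(V_v)$ for $f \in F_m$ primitive, two distinct such subgroups intersect in the centre; hence $E_1 \cap E_2 = Z(V_{v_1}) =: \langle j \rangle$ and $E_2 \cap E_3 = Z(V_{v_2}) =: \langle k \rangle$.

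Choosing $i \in E_1$ and $l \in E_3$ to complete $\{j\}, \{k\}$ to bases of $E_1, E_3$, the commutator relations $[i,j] = [j,k] = [k,l] = 1$ hold (the first in $E_1$, the second in the abelian group $E_2$, the third in $E_3$), defining a homomorphism $\phi \colon G_{NW} \to G$. For injectivity I would use a Bass--Serre argument: $\phi$ induces a $G_{NW}$-equivariant simplicial map from the Bass--Serre tree of $G_{NW}$ (three $\ZZ^2$-vertex orbits joined by two $\ZZ$-edge orbits) into the barycentric subdivision of $T$, sending the fundamental domain bijectively onto the subdivided path $e_1 \cup e_2 \cup e_3$ (vertex orbits of the Bass--Serre tree map to the midpoints of $e_1, e_2, e_3$, and the edge orbits to $v_1, v_2$). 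Since vertex- and edge-stabilisers match by the previous steps, the subgroup theorem of Bass--Serre theory yields injectivity of $\phi$. The hardest step is the second structural ingredient above: arranging non-commensurable centres at adjacent vertices requires a global commensurator analysis, complicated by the possible HNN structure of the quotient graph of groups $T/G$.
\end{claimproof}
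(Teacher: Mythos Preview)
Your overall strategy is sound and close to the paper's, but you work harder than necessary and the step you flag as ``hardest'' has a short local proof that you miss. The paper uses only \emph{one} edge of the quotient graph of groups, not a length-three path in $T$. With $v,w$ its endpoints (or, in the loop case, with $\Gw$ replaced by the conjugate $\Gv^s$ so that $\langle \Gv,\Gv^s\rangle$ is an amalgam of two copies of $\Gv$ over $\Ge$), set $j = t_v$ and $k = t_w$ to be generators of the two centres, and take $i \in \Gv$ to be any element not commuting with $t_w$ and $l \in \Gw$ any element not commuting with $t_v$. There is no need for $i,l$ to lie in further edge stabilisers, so your extra edges $e_1, e_3$ and the attendant self-normalising arguments are unnecessary; injectivity then follows from normal forms in the single amalgam $\Gv \ast_{\Ge} \Gw$.

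The substantive gap is your ``second structural ingredient''. Your global route via $Z(G)=1$ is genuinely problematic in the HNN case: even if all adjacent vertex centres in $T$ coincided, the common generator would only centralise the normal closure of the vertex groups, which in the presence of stable letters is a proper subgroup of $G$ (its quotient is the free fundamental group of the underlying graph), so you cannot directly conclude it lies in $Z(G)$. The paper bypasses this entirely with a purely local argument exploiting a hypothesis you never invoke: if $t_v \in Z(\Gw)$ then $t_v = t_w^{\pm 1}$, whence $\langle \Gv, \Gw \rangle$ has infinite cyclic centre; being a finitely generated subgroup of a free-by-cyclic group it is therefore itself of the form $F \times \ZZ$ with $F$ finite-rank free, strictly containing $\Gv$ and so contradicting the \emph{maximality} of $\Gv$ as an $F_m \times \ZZ$ subgroup guaranteed by Proposition~\ref{linear_splitting}. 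This works verbatim in both the separating-edge and loop cases and dissolves the difficulty you identified.
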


\begin{proof}
    Let $T$ be the simplicial tree obtained in \cref{linear_splitting} and identify $G$ with the fundamental group of the quotient graph of groups, $G \simeq \pi_1(T/G, v)$. Note that since $\Phi$ has growth of order greater than zero, it must be the case that the vertex $v$ admits at least one incident edge $e$.

    Suppose first that $e = [v,w]$ has distinct endpoints $v \neq w$. Note that the stabilisers $\Gv$ and $\Gw$ of the vertices $v$ and $w$, respectively, have infinite cyclic centers. Let $t_v \in \mathcal{G}_v$ and $t_w \in \mathcal{G}_w$ be the generators of the centers of $\mathcal{G}_v$ and $\mathcal{G}_w$, respectively. Since $\mathcal{G}_v$ is a direct product of a free group with an infinite cyclic group, and $\mathcal{G}_e \simeq \mathbb{Z}^2$ is a maximal $\mathbb{Z}^2$-subgroup of $\mathcal{G}_v$, it follows that $\langle t_v  \rangle \leq \Ge$ is a maximal cyclic subgroup. by the same argument, $ \langle t_w \rangle \leq \Ge$ is a maximal cyclic subgroup. 
    
    Suppose that $t_v \in Z(\Gw)$. Then $t_v = t_w^{\epsilon}$ where $\epsilon \in \{\pm 1\}$. It follows that the subgroup of $G$ generated by $\Gv$ and $\Gw$ has infinite cyclic center. Since every subgroup of a free-by-cyclic group is free-by-cyclic, where the free factor possibly has infinite rank, and since $\langle \Gv, \Gw \rangle$ is finitely generated, we deduce that it is of the form $F \times \mathbb{Z}$ where $F$ is finite-rank free. But this contradicts the maximality of $\Gv$. Hence $t_v$ is not contained in the center of $\Gw$, and so there exists an element $g_w \in 
    \Gw$ which does not commute with $t_v$. Similarly, there exists an element $g_v \in \Gv$ which does not commute with $t_w$.

    Let $K = \langle g_v, t_v, t_w, g_w \rangle \leq G$. We have that $[g_v, t_v] = [t_v, t_w] = [t_w, g_w] = 1$, and so there exists an epimorphism 
    \[\begin{split}
       G_{NW} &\to K\\
        i \mapsto g_v, j\mapsto t_v, k &\mapsto t_w, l \mapsto g_w. 
    \end{split}
    \]
    Normal forms show that the epimorphism is injective.

    Suppose now that the endpoints of the edge $e$ coincide. Let $s \in G$ be the element of $G$ corresponding to the loop $e$. Then, the subgroup of $G$ generated by the stabiliser $\Gv$ of $v$ and its conjugate $\Gv^s := s^{-1}\Gv s$, is isomorphic to $\langle \Gv, \Gv^s \rangle \simeq \Gv \ast_{\Ge} \widebar{\Gv}$, where $\widebar{\Gv} \simeq \Gv^s$ denotes a copy of $\Gv$. Observe now that the same argument as above proves that $G_{NW}$ embeds into $\langle \Gv, \Gv^s \rangle$, and thus into $G$. 
\end{proof}

The following result will be used to reduce to the case of linearly growing outer automorphisms. It exists in the literature in various forms (see e.g. \cite{Macura2002}, \cite{Hagen2019}). The proof of the exact statement below can be found in \cite[Proposition~2.5]{AHK}.

\begin{prop}\label{splitting}
    Let $\Phi \in \mathrm{Out}(F_n)$ be UPG with growth of order $d \geq 2$. Then $G = F_n \rtimes_{\Phi} \Z$ splits as the fundamental group of a graph of groups with vertex groups isomorphic to mapping tori $F_k \rtimes_{\Psi} \Z$ with $\Psi \in \mathrm{Out}(F_k)$ a UPG element with growth of order at most $d-1$; moreover, there exists a vertex group with polynomially growing monodromy of order exactly $d-1$.
\end{prop}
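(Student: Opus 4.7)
I would prove this using improved relative train track (IRTT) representatives of UPG outer automorphisms, due to Bestvina--Feighn--Handel. Fix an IRTT representative $f \colon \Gamma \to \Gamma$ of $\Phi$ with filtration $\emptyset = \Gamma_0 \subsetneq \Gamma_1 \subsetneq \cdots \subsetneq \Gamma_N = \Gamma$. In the UPG case, each stratum $H_r$ consists of a single edge $E_r$ satisfying $f(E_r) = E_r \cdot u_r$, where $u_r$ is a closed edge path in $\Gamma_{r-1}$ whose polynomial growth degree is exactly one less than that of $E_r$; consequently the overall growth $d$ of $\Phi$ is realised by the topmost-stratum edges, and just below them sit edges of growth $d-1$.

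Let $\Gamma' \subseteq \Gamma$ be the $f$-invariant subgraph obtained by deleting all edges of growth exactly $d$ (an initial portion of the filtration). After replacing $f$ by a power to ensure it preserves each component of $\Gamma'$ (harmless, as $\Phi^k$ is still UPG and a splitting of a finite-index subgroup of $G$ induces one of $G$ in the required form), I would view $M_f$ as a graph of spaces whose vertex spaces are the mapping tori of $f|_{\Gamma'_j}$, one for each component $\Gamma'_j$ of $\Gamma'$, with edge spaces contributed by the deleted top-stratum edges. For each such $E_r$, the relation $t E_r t^{-1} = E_r u_r$ in $G$ (where $t$ is the stable letter) exhibits $E_r$ as a combinatorial stable letter attaching the vertex spaces that contain the endpoints of $u_r$. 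Seifert--van Kampen then identifies $G$ with the fundamental group of the resulting graph of groups.

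By construction each vertex group has the form $\pi_1(\Gamma'_j) \rtimes_{\Psi_j} \Z$, where $\Psi_j$ is induced by $f|_{\Gamma'_j}$. Since $\Gamma'_j$ inherits the filtration structure of $\Gamma$ and contains no edge of growth $d$, $\Psi_j$ is UPG with growth at most $d-1$. For the sharpness statement, fix any top-stratum edge $E_r$ of growth $d$: the corresponding twist path $u_r$ grows of degree exactly $d-1$, and because an IRTT has no top-degree cancellation under iteration, some edge appearing in $u_r$ must itself grow of degree $d-1$. That edge lives in some component $\Gamma'_j$, forcing the growth of $\Psi_j$ to be exactly $d-1$.

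The main technical obstacle, in my view, is invoking the IRTT structure cleanly: the normal form $f(E_r) = E_r u_r$ together with the degree drop from $E_r$ to $u_r$ is a non-trivial consequence of the Bestvina--Feighn--Handel machinery, and the no-cancellation fact used in the last step is what upgrades the growth of the path $u_r$ to the growth of an individual edge inside it. A secondary but more routine matter is checking that the graph-of-spaces assembly really reproduces $M_f$ and that the subtleties arising if $f$ permutes components of $\Gamma'$ are absorbed into the underlying graph of the decomposition rather than into the vertex groups themselves.
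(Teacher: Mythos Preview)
The paper does not actually prove this proposition: it simply cites \cite[Proposition~2.5]{AHK} (and earlier versions in \cite{Macura2002}, \cite{Hagen2019}) for the exact statement. So there is no in-paper argument to compare against; your sketch is essentially the standard IRTT argument that those references carry out, and as such your overall strategy is correct and is what the cited proof does.

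One point deserves care. You write that replacing $f$ by a power to make it preserve each component of $\Gamma'$ is ``harmless, as \ldots\ a splitting of a finite-index subgroup of $G$ induces one of $G$ in the required form.'' That inference is not valid in general: a Bass--Serre splitting of a finite-index subgroup need not be promoted to a splitting of the ambient group. Fortunately, this step is unnecessary. In the UPG improved relative train track normal form $f(E_r)=E_r u_r$, the map $f$ fixes every vertex of $\Gamma$, so every $f$-invariant subgraph (in particular $\Gamma'$) has $f$-invariant components already; no power is needed. If you keep the sketch, replace the power-passage with this observation.

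The sharpness argument is fine once you invoke the IRTT no-cancellation property: the growth degree of the path $u_r$ equals the maximum growth degree of the edges it crosses, so some edge of $\Gamma'$, and hence some $\Psi_j$, has growth exactly $d-1$.
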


We are now ready to state and prove our main theorem:

\begin{thm}[Theorem~A] Let $\Phi \in \mathrm{Out}(F_n)$ be a polynomially growing outer automorphism and $G = F_n \rtimes_{\Phi} \mathbb{Z}$. Then, the following are equivalent:
\begin{enumerate}
    \item The outer automorphism $\Phi$ has growth of order $d = 0$;
    \item $G$ is virtually a direct product $F_n \times \Z$;
    \item $G$ is subgroup separable;
    \item $G$ does not contain $G_{NW}$.
\end{enumerate}
\end{thm}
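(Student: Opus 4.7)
The plan is to prove the four-way equivalence via the cycle $(1) \Rightarrow (2) \Rightarrow (3) \Rightarrow (4) \Rightarrow (1)$. The first three implications are short and each draws directly on results established earlier in the paper; all of the substantive work is in $(4) \Rightarrow (1)$.

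For $(1) \Rightarrow (2)$, I would use \cref{Levitt} to conclude that $d = 0$ forces $\Phi$ to have finite order in $\mathrm{Out}(F_n)$; picking $k$ with $\Phi^k = \mathrm{id}$ in $\mathrm{Out}(F_n)$ means that some representative $\phi^k$ of $\Phi^k$ is the inner automorphism $\mathrm{inn}(g)$ for some $g \in F_n$. The index-$k$ subgroup $F_n \rtimes_{\phi^k} \Z$ of $G$ is then isomorphic to $F_n \times \Z$ after the change of stable letter $t \mapsto g^{-1}t$. For $(2) \Rightarrow (3)$, recall that $F_n \times \Z$ is subgroup separable (a classical consequence of M.~Hall's theorem) and that LERF is a commensurability invariant among finitely generated groups. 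For $(3) \Rightarrow (4)$, subgroup separability passes to subgroups, so if $G_{NW}$ embedded in $G$ then $G_{NW}$ would itself be LERF, contradicting the Niblo--Wise theorem cited above.

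The interesting implication is $(4) \Rightarrow (1)$, which I argue in its contrapositive form: if $\Phi$ has polynomial growth of order $d \geq 1$, then $G_{NW}$ embeds into $G$. First, replace $\Phi$ by a UPG power $\Phi^k$; since the polynomial growth degree is preserved under non-zero powers, $\Phi^k$ is UPG of growth degree $d$, and $F_n \rtimes_{\Phi^k} \Z$ sits inside $G$ with finite index. If $d = 1$ then \cref{linear_non_LERF} applied to $F_n \rtimes_{\Phi^k} \Z$ gives the embedding of $G_{NW}$ directly. If $d \geq 2$, I iterate \cref{splitting} a total of $d-1$ times: at each stage the proposition produces a vertex group in the graph-of-groups decomposition that is itself a mapping torus $F_m \rtimes_{\Psi} \Z$ whose UPG monodromy has growth degree exactly one less than that of the current step. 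After $d-1$ reductions the resulting subgroup has linearly growing UPG monodromy, and \cref{linear_non_LERF} embeds $G_{NW}$ into it; composing with the chain of subgroup inclusions embeds $G_{NW}$ into $G$.

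The principal obstacle is making the inductive descent in $(4) \Rightarrow (1)$ run cleanly, in particular verifying at each step that the chosen vertex group genuinely has the form needed to reapply \cref{splitting}. This is already built into the statement of that proposition---which guarantees both the mapping-torus structure of each vertex group and the existence of one vertex whose monodromy has growth of order exactly $d-1$---so the induction terminates after precisely $d-1$ steps in a linearly growing UPG mapping torus, and no further hypotheses need to be checked by hand. The only book-keeping required is to note that the final subgroup obtained is still a mapping torus of a linearly growing UPG element of some $\mathrm{Out}(F_m)$, after which \cref{linear_non_LERF} closes out the argument.
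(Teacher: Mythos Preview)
Your proposal is correct and follows essentially the same route as the paper: the cycle $(1)\Rightarrow(2)\Rightarrow(3)\Rightarrow(4)\Rightarrow(1)$, with the first three implications handled by \cref{Levitt}, the LERF property of $F_n\times\Z$ together with commensurability invariance, and the non-separability of $G_{NW}$, and the last implication argued contrapositively by passing to a UPG power and then inducting on the growth degree via \cref{splitting} down to the linear base case \cref{linear_non_LERF}. Your write-up is in fact a bit more explicit than the paper's in justifying $(1)\Rightarrow(2)$ and in spelling out the descent, but the argument is the same.
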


\begin{proof}

The equivalence of $(1)$ and $(2)$ follows from \cref{Levitt}. The implication $(2) \Rightarrow (3)$ follows from the well-known fact that direct products of the form $F_n \times \Z$ are subgroup separable (see e.g. \cite{AllenbyGregorac1973}), and the fact that subgroup separability is a commensurability invariant. Niblo--Wise show that $G_{NW}$ is not subgroup separable in \cite[Thoerem~1.2]{NW}, and thus any subgroup which contains $G_{NW}$ is not subgroup separable, which gives the implication $(3) \Rightarrow (4)$.

It remains to show $(4) \Rightarrow (1)$. Suppose that $\Phi$ has growth of order $d > 0$. We recall that for every outer automorphism $\Phi \in \mathrm{Out}(F_n)$ there exists some integer $k > 0$ such that $\Phi^k$ is UPG. The element $\Phi^k$ has the same polynomial growth rate as $\Phi$, and $G' := F_n \rtimes_{\Phi^k} \Z$ is isomorphic to a finite index subgroup of $G = F_n \rtimes_{\Phi} \Z$. Since subgroup separability is a commensurability invariant, it thus follows that there is no loss of generality in assuming that $\Phi$ is UPG.

The proof now follows by induction on the degree of growth. The base case $d = 1$ is exactly \cref{linear_non_LERF}. For the inductive step, apply \cref{splitting} to deduce that if $\Phi$ has growth of order $d \geq 2$, then it splits as a graph of groups where some vertex group is a mapping torus of an UPG outer automorphism with growth of order $d-1$.
\end{proof}

\section{Generic behaviour of deficiency 1 groups}

In this section, we study the BNS invariant of a random deficiency 1 group. To that end, Lemma~\ref{sigma invariant} characterises the maps $\phi \colon G \to \mathbb{Z}$ which are \emph{not} contained in the BNS invariant of $G$, in terms of the minima of $\phi$ evaluated at the suffixes of the relators. This approach is similar in flavour to that of Brown's algorithm \cite{Brown87}, a classical tool used to calculate the BNS invariant of 2-generator 1-relator groups. However, as we are (in general) no longer in the realm of 1-relator groups, the characterisation that we obtain is less clean than that in \cite{Brown87}, and the methods used to prove it are completely different. 

Let $R$ be a ring and $t$ a formal symbol. We write $R(\!(t)\!)$ to denote the set of Laurent power series over $R$ with a single variable $t$,
\[R(\!( t )\!) = \left\{ \sum_{i \geq k} a_it^{i} \mid a_i \in R, k \in \mathbb{Z}\right\}. \]
Let $\alpha$ be an automorphism of $R$. The ring of \emph{twisted Laurent series} is the set $R(\!(t)\!)$, with the obvious summation and multiplication defined by linearly extending 
\[r_1t^{n_1} \cdot r_2t^{n_2} := r_1 \alpha^{n_1}(r_2) t^{n_1 + n_2},\] for all $r_1, r_2 \in R$ and $n_1, n_2 \in \mathbb{Z}$. The \emph{$t$-order} of a Laurent series $f \in R(\!(t)\!)$, denoted $\mathrm{ord}_t(f)$, is the lowest power of $t$ with a non-zero coefficient in the expansion of $f$. We define $\mathrm{ord}_t(0) = \infty.$

Let $G$ be a group and $\phi \colon G \to \mathbb{Z}$ a homomorphism. Let $t \in G$ be an element such that $\phi(t)$ generates $\mathbb{Z}$. Let $K = \mathrm{ker}(\phi)$ and let $\mathbb{Q}K (\!(t)\!)$ denote the ring of twisted Laurent series, where the twisting automorphism $\alpha$ is obtained by extending the automorphism of $K$ induced by the conjugation action of $t$ on $K$ in $G$. Then there is a natural identification $\widehat{\mathbb{Q}G}^{\phi} \simeq \mathbb{Q}K(\!(t)\!)$. Given a subset $S \subseteq \mathbb{Z}$, we say that $x \in \widehat{\mathbb{Q}G}^{\phi}$ is \emph{supported over $S$} if $x = \sum_{i \in S}a_it^{i}$, for some $a_i \in \mathbb{Q}K$.

\begin{lemma}\label{invertibility}
Let $G$ be a group such that the group ring $\mathbb{Q}G$ has no zero-divisors. Let $B$ and $P$ be $n \times n$ matrices over $\mathbb{Q}G$. Suppose that $B = \mathrm{diag}(k_1t^{\rho_1}, \ldots, k_nt^{\rho_n})$, where $k_i \in \mathbb{Q}K$ and $\rho_i \in \mathbb{Z}$ for every $1 \leq i \leq n$. Assume that $k_i \in K$ for $i > 1$ and that $k_1$ is not a unit in $\widehat{\mathbb{Q}G}^{\phi}$. Suppose that all the elements in the $i^{th}$ row of $P$ are supported over $\mathbb{Z} \cap [\rho_{i} + 1, \infty)$. Then the matrix $A = B+ P$ is not invertible over $\widehat{\mathbb{Q}G}^{\phi}$.
\end{lemma}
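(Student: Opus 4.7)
My plan is a Schur-complement reduction to a $1\times 1$ problem, after normalising the $t$-orders, followed by a leading-coefficient argument that ties invertibility of the Schur complement to invertibility of $k_1$ itself. For the normalisation, let $D = \mathrm{diag}(t^{\rho_1},\ldots,t^{\rho_n})$, which is a unit in $M_n(\widehat{\mathbb{Q}G}^\phi)$. The matrix $\widetilde{A} := D^{-1}A$ decomposes as $\widetilde{A} = C + Q$, where $C = \mathrm{diag}(c_1,\ldots,c_n)$ with $c_i := \alpha^{-\rho_i}(k_i) \in \mathbb{Q}K$, and every entry of $Q$ has $t$-order $\geq 1$ (a direct computation using the support condition on $P$). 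Since conjugation by $t$ is a ring automorphism of $\widehat{\mathbb{Q}G}^\phi$, each $c_i$ is a unit in $\widehat{\mathbb{Q}G}^\phi$ iff $k_i$ is, so $c_1$ remains a (nonzero) non-unit while $c_2,\ldots,c_n \in K$ are units. It now suffices to prove $\widetilde{A}$ is not invertible.

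Next I would establish that the bottom-right $(n-1)\times(n-1)$ submatrix $\widetilde{A}'$ of $\widetilde{A}$ is invertible. Writing $\widetilde{A}' = C'(I+N)$ with $C' = \mathrm{diag}(c_2,\ldots,c_n)$ and $N := (C')^{-1}Q'$ having all entries of $t$-order $\geq 1$, the Neumann series $\sum_{k\geq 0}(-N)^k$ converges entrywise in the subring $R_0 \subseteq \widehat{\mathbb{Q}G}^\phi$ of elements of non-negative $t$-order: each $t^m$-coefficient is a finite sum because $(N^k)_{ij}$ has $t$-order $\geq k$. This yields an inverse of $I+N$, hence of $\widetilde{A}'$, with entries in $R_0$.

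I would then invoke the Schur-complement block factorisation
\[
\widetilde{A} \;=\; \begin{pmatrix} 1 & \widetilde{A}_{1*}(\widetilde{A}')^{-1} \\ 0 & I \end{pmatrix}\begin{pmatrix} \widetilde{s} & 0 \\ 0 & \widetilde{A}' \end{pmatrix}\begin{pmatrix} 1 & 0 \\ (\widetilde{A}')^{-1}\widetilde{A}_{*1} & I \end{pmatrix},
\]
where $\widetilde{s} := \widetilde{A}_{11} - \widetilde{A}_{1*}(\widetilde{A}')^{-1}\widetilde{A}_{*1}$. Since the outer block-triangular factors are invertible and $\widetilde{A}'$ is invertible, $\widetilde{A}$ is invertible iff $\widetilde{s}$ is. Because the entries of $\widetilde{A}_{1*}$ and $\widetilde{A}_{*1}$ have $t$-order $\geq 1$ while those of $(\widetilde{A}')^{-1}$ have $t$-order $\geq 0$, the product $\widetilde{A}_{1*}(\widetilde{A}')^{-1}\widetilde{A}_{*1}$ has $t$-order $\geq 2$. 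Together with $\widetilde{A}_{11} = c_1 + (\text{$t$-order}\geq 1)$ this gives $\widetilde{s} = c_1 + r$ with $\mathrm{ord}_t(r) \geq 1$.

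The final step is to show $\widetilde{s}$ is not a unit. Suppose for contradiction $\widetilde{s}y = y\widetilde{s} = 1$ for some $y = \sum_{i\geq m} y_i t^i$ with $y_m \neq 0$. Since $c_1 \neq 0$ and $\mathbb{Q}K \subseteq \mathbb{Q}G$ inherits the no-zero-divisor hypothesis, the lowest-order term of $\widetilde{s}y$ is $c_1 y_m t^m$ (the contribution from $ry$ has $t$-order $\geq m+1$). Matching $\widetilde{s}y = 1$ forces $m = 0$ and $c_1 y_0 = 1$; symmetrically $y\widetilde{s}=1$ gives $y_0 c_1 = 1$. Thus $c_1$ is a two-sided unit in $\mathbb{Q}K \subseteq \widehat{\mathbb{Q}G}^\phi$, contradicting the hypothesis on $k_1$. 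The main technical hurdle is the twisted non-commutative bookkeeping in Step~2 (convergence of the Neumann series in $R_0$) and the careful $t$-order accounting through the Schur complement; once those are in hand, the punchline is forced.
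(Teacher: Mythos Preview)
Your proof is correct and follows essentially the same strategy as the paper: normalise the $t$-orders by a diagonal left-multiplication, eliminate the invertible lower-right block to reduce to a single scalar of the form $c_1+(\text{$t$-order}\geq 1)$, and finish with a leading-coefficient comparison against $1$. Your Schur-complement factorisation and Neumann-series inversion are just a cleaner packaging of the paper's elementary column operations, and your two-sided leading-term argument is in fact tidier than the paper's case analysis on $\mathrm{ord}_t(c_{11}p'_{11})$; the one tacit assumption you share with the paper is $k_1\neq 0$, which holds in the intended application.
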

\begin{proof}
Since $t^{\rho_1}$ and $k_it^{\rho_i}$ for $i > 1$ are units in $\mathbb{Q}G$, the matrix \[M = \mathrm{diag}(t^{\rho_1}, k_2t^{\rho_2},  \ldots, k_nt^{\rho_n})\] is an invertible matrix over $\widehat{\mathbb{Q}G}^{\phi}$. Hence $A$ is invertible if and only if $A' = M^{-1}A$ is invertible. The diagonal elements of $A'$ other than the element in the first row are of the form $1 + p_{ii}$, for some $p_{ii} \in \widehat{\mathbb{Q}G}^{\phi}$ supported over a positive subset of the integers. Such elements are invertible over $\widehat{\mathbb{Q}G}^{\phi}$ and the inverse $(1+ p_{ii})^{-1}$ is an element supported over non-negative integers. Hence by applying elementary column operations over $\widehat{\mathbb{Q}G}^{\phi}$, we may transform $A'$ into an upper triangular matrix $A''$ where the first element on the diagonal is given by $k_1 + p'_{11}$, with $k_1 \in \mathbb{Q}K$ a non-unit, and $p'_{11} \in \widehat{\mathbb{Q}G}^{\phi}$, an element supported over the positive integers.  Since elementary column operations are invertible, again $A''$ is invertible if and only if $A'$ is invertible. 

Suppose now that $A''$ is invertible over $\widehat{\mathbb{Q}G}^{\phi}$ and let $C = (c_{ij})$ be the inverse. Then $c_{11}(k_1 + p'_{11}) = 1$. Since $\mathbb{Q}G$ does not have non-trivial zero-divisors, neither does $\widehat{\mathbb{Q}G}^{\phi}$. Hence for any elements $p,q \in \widehat{\mathbb{Q}G}^{\phi}$ $\mathrm{deg}_t(pq) = \mathrm{deg}_t(p) + \mathrm{deg}_t(q)$. Suppose that $\mathrm{ord}_t(c_{11}p_{11}') > 0$. Then $\mathrm{ord}_t(c_{11}k_1) = \mathrm{ord}_t(1 - c_{11}p_{11}') = 0$. Hence \[ 0 = \mathrm{ord}_t(c_{11} k_1) = \mathrm{ord}_t(c_{11}) + \mathrm{ord}_t(k_1) = \mathrm{ord}_t(c_{11}).\] Let $d \in \mathbb{Q}K$ be the coefficient of the $t^0$ term in $c_{11}$. Note that $d \neq 0$. Then $dk_{1} = 1$ and thus $k_1$ is a unit. Hence $\mathrm{ord}_t(c_{11}p_{11}') \leq 0$.

Suppose that $\mathrm{ord}_t(c_{11}p_{11}') < 0$. Then \[\mathrm{ord}_t(c_{11}k_1) = \mathrm{ord}_t(1 - c_{11}p_{11}') = \mathrm{ord}_t(c_{11}p_{11}').\]
Hence $\mathrm{ord}_t(c_{11}k_1) = \mathrm{ord}_t(c_{11}p_{11}')$. Thus \[0 = \mathrm{ord}_t(k_1) = \mathrm{ord}_t(p_{11}') >0.\] 
Hence, it must be the case that $\mathrm{ord}_t(c_{11}p_{11}') = 0$. But then $\mathrm{ord}_t(c_{11}) < 0$ and thus $\mathrm{ord}_t(c_{11}k_1) < 0$. But then $\mathrm{ord}_t(1 - c_{11}p_{11}') < 0$, which is impossible since $\mathrm{ord}_t(c_{11}p_{11}') = 0$. In all cases we get a contradiction, and thus $A''$ is not invertible.\end{proof}

Given a cyclically reduced word $w = w_1\cdots w_m$ in the alphabet $\{x_1^{\pm}, \ldots, x_{n+1}^{\pm}\}$ and $k \leq |w|$, we let $[w]_k = w_1 \ldots w_k$ be the prefix of $w$ of length $k$. Let $C_w$ denote the cyclic graph of length $|w|$, with a marked vertex $\ast$ and labelled edges, such that consecutive edges of $C_w$, starting at the vertex $\ast$ and moving in the clockwise direction, spell out the word $w$. Assign labels to vertices of $C_w$ so that the vertex $v$ is labelled by the word which is spelled out by the embedded path joining $\ast$ to $v$, in the clockwise direction. Let $\phi \colon F(x_1, \ldots, x_{n+1}) \to \mathbb{Z}$ be a homomorphism. There is an induced map $\tilde{\phi} \colon C_w \to \mathbb{R}$ defined by linearly extending the map $\phi$ from the labels of the vertices to the whole graph. We define the \emph{lower section} of $w$ to be the preimage 
\[L_{\phi}(w) = \tilde{\phi}^{-1}(\mathrm{min}\{\tilde{\phi}(p) \mid p \in C_r\}).\]

Let $(r_1, \ldots, r_n)$ be a collection of cyclically reduced words in the alphabet $\{x_1^{\pm}, \ldots, x_{n+1}^{\pm}\}$. Let $\phi \colon F(x_1, \ldots, x_{n+1}) \to \mathbb{Z}$ be a homomorphism. The tuple $((r_1, \ldots, r_n), \phi)$ is said to satisfy the \emph{unique minimum condition} if, after possible re-ordering, the following conditions are satisfied.
\begin{enumerate}
    \item We have that $\phi(x_i) \geq 0$ for each $i \leq n$ and $\phi(x_{n+1}) <0$. 
    \item The homomorphism $\phi$ vanishes on each $r_i$.
    \item The lower section $L_{\phi}(r_i)$ consists of exactly one of the following:
\begin{itemize}
    \item A single vertex such that one of the adjacent edges is labelled by $x_i^{\pm}$ and the other is labelled by $x_{n+1}^{\pm}$.
    \item A single edge labelled by $x_i^{\pm}$ such that the adjacent edges are labelled by $x_{n+1}^{\pm}$.
\end{itemize}
\end{enumerate}
The tuple $((r_1, \ldots, r_n), \phi)$ satisfies the \emph{repeated minimum condition} if it satisfies the unique minimal condition, except at a single relator $r_j$, for some $1 \leq j \leq m$, where $L_{\phi}(r_j)$ consists of two occurrences of a vertex, or two occurrences of an edge as in the unique minimum condition. In that case, we call $r_j$ the \emph{relator with a repeated minimum}.

Let $G$ be a group given by the deficiency 1 presentation \[G = \langle x_1, \ldots, x_{n + 1} \mid r_1, \ldots, r_n \rangle.\] Let $\phi \colon G \to \mathbb{Z}$ be a homomorphism with kernel $K$, and $t \in G$ an element such that $\phi(t)$ generates $\mathbb{Z}$. 

\begin{lemma}\label{matrix}
Suppose that $((r_1, \ldots, r_n), \phi)$ satisfies the repeated minimum condition, where $r_1$ is the relator with a repeated minimum. Then for each $i\leq n$, there exists some integer $P_i \in \mathbb{Z}$, and for every $j \leq n$ and $k \geq P_i$, there exist elements $u_{ij,k} \in \mathbb{Q}K$, such that the Fox derivatives of $r_i$ are of the form 
\[\frac{\partial r_i}{\partial x_j} = \sum_{k \geq P_i} u_{ij,k}t^{k},\]
such that for any $i \neq j$, the element $u_{ij, P_i} = 0$, and $u_{ii, P_i} \in K$ for $i \neq 1$, and $u_{11, P_1}$ is a non-unit in $\widehat{\mathbb{Q}G}^{\phi}$.
\end{lemma}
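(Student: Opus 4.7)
The plan is to compute each Fox derivative combinatorially on the cyclic graph $C_{r_i}$, then read off the $t^{P_i}$-coefficient under the identification $\widehat{\mathbb{Q}G}^{\phi} \simeq \mathbb{Q}K(\!(t)\!)$. For any cyclically reduced word $r = u_1\cdots u_m$, the Fox derivative admits the standard expansion
\[\frac{\partial r}{\partial x_j} \;=\; \sum_{k\,:\,u_k = x_j} [r]_{k-1} \;-\; \sum_{k\,:\,u_k = x_j^{-1}} [r]_k ,\]
in which each summand is one specific endpoint of an edge of $C_r$ labelled $x_j^{\pm}$ (tail for $x_j$, head for $x_j^{-1}$). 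Writing each prefix $[r]_\ell = k_{[r]_\ell}\cdot t^{\tilde\phi([r]_\ell)}$ with $k_{[r]_\ell}\in K$, every summand lives in the $t^{\tilde\phi(\cdot)}$-component, so setting $P_i := \min_{v\in C_{r_i}} \tilde\phi(v)$, the Laurent series $\partial r_i/\partial x_j$ has $t$-order $\geq P_i$, and the coefficient $u_{ij,P_i}$ collects exactly those contributions whose relevant endpoint lies in $L_\phi(r_i)$.

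For the vanishing $u_{ij,P_i}=0$ when $i\neq j$ and $j\leq n$: the (repeated) minimum condition forces every vertex of $L_\phi(r_i)$ to have its two incident edges labelled in $\{x_i^{\pm},x_{n+1}^{\pm}\}$. Consequently no edge labelled $x_j^{\pm}$ with $j\leq n$ and $j\neq i$ has an endpoint in $L_\phi(r_i)$, so all such edges contribute only at $t$-orders strictly greater than $P_i$. For $i\neq 1$, the unique minimum hypothesis guarantees that exactly one $x_i^{\pm}$-edge is adjacent to $L_\phi(r_i)$ (an outgoing $x_i$ at the minimum vertex, an incoming $x_i^{-1}$ there, or the unique edge of $L_\phi(r_i)$ itself in the flat case $\phi(x_i)=0$), yielding $u_{ii,P_i} = \pm k_v$ for the group element $k_v\in K$ representing the relevant endpoint.

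For the repeated case $i=1$, the lower section $L_\phi(r_1)$ contains exactly two minima, each of the two types occurring in the unique minimum condition. Applying the same edge-bookkeeping at both minima shows
\[u_{11,P_1} \;=\; \sigma_0\,k_{v_0} + \sigma_0'\,k_{v_0'},\qquad \sigma_0,\sigma_0'\in\{\pm 1\},\ k_{v_0},k_{v_0'}\in K,\]
where $v_0,v_0'$ are the two endpoints used. Factoring, $u_{11,P_1}=\sigma_0 k_{v_0}\bigl(1+\sigma_0\sigma_0'\,h\bigr)$ with $h:=k_{v_0}^{-1}k_{v_0'}\in K$, so in particular $\phi(h)=0$. \cref{units} applied to $h$ and $\alpha=-\sigma_0\sigma_0'\in\{\pm 1\}$ shows that $h-\alpha$ is a non-unit of $\widehat{\mathbb{Q}G}^{\phi}$, and therefore so is $1+\sigma_0\sigma_0'h$, and hence so is $u_{11,P_1}$ (since $\sigma_0 k_{v_0}$ is a unit).

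The main obstacle is the combinatorial identification of precisely which $x_j^{\pm}$-edges contribute at $t^{P_i}$: this requires separately tracking the single-vertex versus single-edge subcases of the minimum condition and verifying that the tail/head convention of the Fox derivative lines up with the local orientation of the $x_i^{\pm}$-edge at each minimum of $\tilde\phi$. The application of \cref{units} in the repeated case additionally presumes that $h$ has infinite order in $K$; the degenerate alternative $h=1$ with matching signs yields $u_{11,P_1}=0$, which is itself a non-unit and hence compatible with the conclusion, while the opposite-sign configuration with $h=1$ (forcing $u_{11,P_1}=0$) is similarly harmless.
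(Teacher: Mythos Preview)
Your argument follows the same route as the paper's: expand the Fox derivative as a signed sum of prefixes, set $P_i=\tilde\phi(L_\phi(r_i))$, and read off which prefixes can contribute at level $t^{P_i}$ from the local edge labels at the minimum. Your treatment of the off-diagonal vanishing and of the case $i>1$ matches the paper's, and you are in fact more careful than the paper in tracking the signs $\sigma_0,\sigma_0'$ coming from $x_1$ versus $x_1^{-1}$ occurrences.

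There is, however, a slip in your handling of the degenerate case $h=1$. With matching signs ($\sigma_0=\sigma_0'$) one has $1+\sigma_0\sigma_0' h = 1+1 = 2$, so $u_{11,P_1}=2\sigma_0 k_{v_0}$, which is a \emph{unit} in $\widehat{\mathbb{Q}G}^{\phi}$, not zero; it is only the opposite-sign configuration that collapses to $0$. So your parenthetical dismissal of this case is incorrect as written. The paper's proof simply does not address this degeneracy (it applies \cref{units} without checking that the element $v'$ has infinite order or is nontrivial), so you have identified a genuine loose end rather than introduced one; but your proposed resolution of it is wrong. In the intended application (\cref{sigma invariant}) one assumes $\mathbb{Q}G$ has no zero-divisors, hence $G$ is torsion-free, which reduces the issue to ruling out $h=1$ in $G$; if you want to close the gap you should either argue that the two contributing prefixes cannot coincide as group elements under the hypotheses in force, or add that hypothesis explicitly.
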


\begin{proof}

For every relator $r_i$ and generator $x_j$, the partial derivative $\frac{\partial r_i}{\partial x_j}$ is the sum of prefixes of $r_i$ of the form $ux_j^{-1}$ and $v$, where $v$ immediately precedes an instance of $x_j$ in $r_i$. For each $i$, let $P_i = \tilde{\phi}(L_{\phi}(r_i)) \in \mathbb{Z}$. Hence, for every summand $u$ of $\frac{\partial r_i}{\partial x_j} \in \mathbb{Z}G$, we have that $\phi(u) \geq P_i$ and $\phi(u) = P_i$ if and only if $u$ is the label of a vertex of $C_{r_i}$ contained in $L_{\phi}(r_i)$. Any such vertex has adjacent edges labelled by $x_i^{\pm}$ and $x_{n+1}^{\pm}$. In particular, either the prefix $u$ has $x_i^{\pm}$ as its last letter and is followed by $x_{n+1}^{\pm}$ in $r_i$, or the same holds but with the roles of $x_i$ and $x_{n+1}$ reversed. This implies that for every summand $u$ of $\frac{\partial r_i}{\partial x_j}$, if $i \neq j$ then $\phi(u) > P_i$.

Now suppose that $i > 1$. Let $\mathcal{A} = \{u_{\alpha}\}$ be the collection of summands of $\frac{\partial r_i}{\partial x_i}$ such that $\phi(u_{\alpha}) = P_i$. Each element of $\mathcal{A}$ must be the label of a vertex in $L_{\phi}(r_i)$.  Suppose that $L_{\phi}(r_i)$ is a single vertex with label $u$. Since each $\phi(x_i) \geq 0$,  either $u$ is followed by $x_i$ in $r_i$, or the final letter of $u$ is $x_i^{-1}$. In either case, $u \in \mathcal{\mathcal{A}}$ and thus $\mathcal{A}$ contains exactly one element. Suppose instead that $L_{\phi}(r_i)$ consists of two vertices $u$ and $ux_i^{\pm}$. Exactly one of these words is a summand of $\frac{\partial r_i}{\partial x_i}$, depending on whether we choose $x_i$ or $x_i^{-1}$. Hence, it follows in this case also that $\mathcal{A}$ contains exactly one element, and this element can be expressed as $kt^{P_i}$, for some $k \in K$.

Finally we consider $\frac{\partial r_1}{\partial x_1}$. Defining $\mathcal{A}$ as above, $\mathcal{A}$ has exactly two elements given by the reduced words $u$ and $uv$, where $\phi(u) = P_i$ and $\phi(v) = 0$, where $u$ is the label of the path joining the marked vertex $\ast$ to the first minimum vertex which is a summand of $\frac{\partial r_1}{\partial x_1}$, and $v$ is the label of the path joining the two minima. Then $u = kt^{P_i}$ and $uv = kv't^{P_i}$, for some $k, v' \in K$. Note that the element $1+v' \in \mathbb{Z}G$ is not invertible over $\widehat{\mathbb{Q}G}^{\phi}$ by Lemma~\ref{units}, and thus $k(1-v')$ is not a unit in $\widehat{\mathbb{Q}G}^{\phi}$.\end{proof}

\begin{lemma}\label{sigma invariant}
Let $G$ be a group given by the deficiency 1 presentation
\[G = \langle x_1, \ldots, x_{n+1} \mid r_1, \ldots, r_n \rangle.\] Suppose that $\mathbb{Q}G$ has no non-trivial zero-divisors. Let $\phi \colon G \to \mathbb{Z}$ be a homomorphism. \begin{enumerate}
    \item If $((r_1, \ldots, r_n),\phi)$ satisfies the unique minimum condition then $\phi \in \Sigma(G)$.
    \item If $((r_1, \ldots, r_n),\phi)$ satisfies the repeated minimum condition then $\phi \not \in \Sigma(G)$.
\end{enumerate} 
\end{lemma}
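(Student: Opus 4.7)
The plan is to apply \cref{Sikorav}, reducing the problem to computing $H_1(G; \widehat{\mathbb{Q}G}^{\phi})$ from the chain complex of the presentation $2$-complex and showing that this group vanishes in case (1) but is non-zero in case (2). The relevant complex is
\[R^n \xrightarrow{J} R^{n+1} \xrightarrow{\partial_1} R,\]
where $R = \widehat{\mathbb{Q}G}^{\phi}$, the map $J$ is the $n \times (n+1)$ Fox Jacobian with $(i,j)$-entry $\partial r_i/\partial x_j$, and $\partial_1(e_j) = x_j - 1$. Condition (1) of the unique and repeated minimum definitions ensures that $\phi(x_{n+1}) < 0$, so by \cref{units} the element $x_{n+1} - 1$ is a unit in $R$. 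Consequently $\partial_1$ is split surjective and $\ker(\partial_1) \cong R^n$ via projection onto the first $n$ coordinates; under this identification $\im(J)$ corresponds to the image of the $n \times n$ submatrix $J'$ obtained from $J$ by deleting the $(n+1)$st column. Hence $H_1(G;R) \cong \operatorname{coker}(J')$, and the problem reduces to deciding invertibility of $J'$ over $R$.

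For case (2), this is precisely what \cref{matrix} and \cref{invertibility} were engineered to handle. Setting $B = \mathrm{diag}(u_{11, P_1}t^{P_1}, \ldots, u_{nn, P_n}t^{P_n})$ and $P = J' - B$, \cref{matrix} ensures that $u_{ii, P_i} \in K$ for $i > 1$, that $u_{11, P_1}$ is a non-unit in $R$, and that the $i$th row of $P$ is supported over $[P_i + 1, \infty)$. These are precisely the hypotheses of \cref{invertibility}, which yields that $J'$ is not invertible over $R$. Hence $H_1(G;R) \neq 0$ and $\phi \notin \Sigma(G)$.

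For case (1), the same analysis as in the proof of \cref{matrix} applies, but now because every lower section $L_\phi(r_i)$ consists of a single vertex or single edge, the lowest-order diagonal coefficient $u_{ii,P_i}$ lies in $K$ for every $i \in \{1, \ldots, n\}$; the non-unit exception at $i = 1$ no longer arises. In particular each diagonal entry of $B$ is a unit in $R$, so $J' = B(I + B^{-1} P)$, where the entries of $B^{-1} P$ have strictly positive $t$-order. The matrix $I + B^{-1} P$ is then inverted by the geometric series $\sum_{k \geq 0} (-B^{-1} P)^k$, which converges in the $t$-adic topology on $\mathbb{Q}K(\!(t)\!) \cong R$. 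Hence $J'$ is invertible, $H_1(G;R) = 0$, and $\phi \in \Sigma(G)$.

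The main obstacle is the initial reduction: one must carefully verify that the splitting argument for $\partial_1$, powered by \cref{units}, correctly identifies $H_1(G;R)$ with $\operatorname{coker}(J')$, and that the structural content of \cref{matrix} extends to the unique-minimum setting. Once these are in hand, case (2) follows almost immediately from the preparatory lemmas, and case (1) requires only a standard geometric series argument.
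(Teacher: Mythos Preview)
Your proof is correct. For part (2) you follow the paper's argument essentially verbatim: reduce via \cref{Sikorav}, identify $H_1(G;R)$ with $\operatorname{coker}(J')$ using the invertibility of $x_{n+1}-1$ from \cref{units}, and then invoke \cref{matrix} and \cref{invertibility}.

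For part (1) your route differs from the paper's. The paper simply cites \cite[Theorem~3.4]{KKW} and moves on, whereas you give a direct argument within the same Fox-calculus framework: under the unique minimum condition the analysis of \cref{matrix} shows every diagonal entry of $B$ is a unit, so $J'=B(I+B^{-1}P)$ with $B^{-1}P$ of strictly positive $t$-order, and the geometric series inverts $I+B^{-1}P$ in $\mathbb{Q}K(\!(t)\!)$. This is correct and has the advantage of being self-contained and parallel to case (2); it also makes visible that the zero-divisor hypothesis is not actually needed for (1) beyond ensuring $x_{n+1}$ has infinite order so that \cref{units} applies. The cost is that you must re-run the combinatorial analysis of \cref{matrix} in the unique-minimum setting, but as you note this is a straightforward specialisation. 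One small point worth making explicit: to invoke \cref{units} you need $x_{n+1}$ to have infinite order in $G$, which the paper deduces from the absence of zero-divisors in $\mathbb{Q}G$.
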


\begin{proof}
The first statement follows from \cite[Theorem~3.4]{KKW}.

For the second statement, by Theorem~\ref{Sikorav} it suffices to show that $H_1(G; \widehat{\mathbb{Q}G}^{\phi})$ is non-trivial whenever $((r_1, \ldots, r_n), \phi)$ satisfies the repeated minimum condition. To that end, consider the chain complex of $\mathbb{Q}G$-modules
\begin{equation}\label{chain complex}C_2 \xrightarrow{\partial_2} C_1 \xrightarrow{\partial_1} C_0.\end{equation}
Here, the $\mathbb{Q}G$-module $C_2$ is the free module of rank $n$ with an ordered basis identified with the relators $(r_1, \ldots, r_n)$. The $\mathbb{Q}G$-module $C_1$ is the free module of rank $n+1$ with an ordered basis identified with the generators $(x_1, \ldots, x_{n+1})$ and $C_0 = \mathbb{Q}G$. The boundary map $\partial_1$ is given by the column vector with entries $x_i - 1$, for $1 \leq i \leq n+1$, and the boundary map $\partial_2$ is the matrix $A$ of Fox derivatives $\left(\frac{\partial r_i}{\partial x_j} \right)$. After possible re-ordering, we may assume that $r_1$ is the relator with the repeated minimum. We tensor the chain complex \eqref{chain complex} with $\widehat{\mathbb{Q}G}^{\phi}$ and let $(e_1, \ldots, e_{n+1})$ be the resulting free generating set of $C_1 \otimes \widehat{\mathbb{Q}G}^{\phi}$. We write $A'$ to denote the matrix obtained from $A$ by restricting the image of the boundary map to the subspace spanned by  $\{e_1, \ldots, e_n\}$. We claim that
\[H_1(G, \widehat{\mathbb{Q}G}^{\phi}) = \mathrm{coker}(A').\] 
Since $\mathbb{Q}G$ has no non-trivial zero-divisors, the element $x_{n+1}$ has infinite order in $G$. By the definition of the repeated minimum condition, $\phi(x_{n+1}) \neq 0$ and thus by Lemma~\ref{units}, the element $x_{n+1} - 1$ is invertible over $\widehat{\mathbb{Q}G}^{\phi}$. Let us define a map 
\[\begin{split} C_1 \otimes \widehat{\mathbb{Q}G}^{\phi} &\to \mathrm{ker}(\partial_1 \otimes \mathrm{id}) \\
\sum_{i}^{n+1} \lambda_i e_i &\mapsto \sum_{i=1}^n \lambda_i e_i + \lambda_{n+1}'e_{n+1},
\end{split}
\] where $\lambda_{n+1}' = - \sum_{i=1}^{n}\lambda_i(x_i - 1)(x_{n+1} - 1)^{-1}$. This map is clearly onto and every element of $\mathrm{im}(A')$ is sent to $\mathrm{im}(A)$. This proves the claim.

Combining Lemma~\ref{matrix} with Lemma~\ref{invertibility} shows that $A'$ is non-invertible over $\widehat{\mathbb{Q}G}^{\phi}$. Thus $H_1(G, \widehat{\mathbb{Q}G}^{\phi}) \neq 0$. \end{proof}

We are now ready to prove the key lemma of the section, inspired by \cite[Proposition~5.1]{KKW}.

\begin{lemma}\label{min_max}
Let $G$ be a random group of deficiency 1. Then, with positive probability, there exists a character $\phi \colon G \to \mathbb{Z}$ such that $\phi$ satisfies the unique minimum condition and $-\phi$ satisfies the repeated minimum condition. 
\end{lemma}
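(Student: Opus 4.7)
The plan is to pin down the algebraic shape that any valid $\phi$ must have, and then to cut out a good event on $\mathcal{R}_l$ on which a character of that shape exists and has the required cyclic-walk structure. Combining the unique minimum condition on $\phi$ with the repeated minimum condition on $-\phi$, the strict-sign clause in part~(1) of each definition forces $\phi$ to take a strictly negative value at exactly one generator, a strictly positive value at exactly one other generator, and the value zero on the remaining $n-1$. After relabelling generators, it therefore suffices to search for $\phi$ of the form $\phi(x_1)=1$, $\phi(x_{n+1})=-1$, and $\phi(x_j)=0$ for $2\leq j\leq n$.

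I would then introduce an explicit good event $A_l\subseteq\mathcal R_l$ on which, simultaneously:
\begin{enumerate}
    \item[(i)] each $r_i$ satisfies the balance $\sigma_1(r_i)=\sigma_{n+1}(r_i)$, so that $\phi$ descends to the presented group;
    \item[(ii)] the lower section $L_\phi(r_i)$ consists of a single vertex or single edge whose adjacent edges are labelled by $x_i^{\pm}$ and $x_{n+1}^{\pm}$, as required by the unique minimum condition;
    \item[(iii)] the lower section $L_{-\phi}(r_i)$ consists of a single vertex or single edge satisfying the unique minimum condition for $-\phi$ (after the appropriate re-ordering, in which $x_1$ plays the role of the distinguished negative generator) for every $i\neq 1$, and consists of exactly two such configurations for $r_1$, matching the repeated minimum condition for $-\phi$.
\end{enumerate}
On $A_l$ the pair $((r_1,\ldots,r_n),\phi)$ satisfies the unique minimum condition and $((r_1,\ldots,r_n),-\phi)$ satisfies the repeated minimum condition, with $r_1$ the exceptional relator, and the lemma follows.

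The technical heart of the argument is the estimate $\liminf_{l\to\infty}\mathbb P(A_l)>0$. The cyclic walk $\tilde\phi$ on $C_{r_i}$ only moves at $x_1^{\pm}$ and $x_{n+1}^{\pm}$ letters, so conditioning on (i) the relevant subword is approximately a random walk bridge on $\mathbb Z$; bridge asymptotics give that the local time at each extremum is of geometric type, taking the values $1$ and $2$ with probability bounded away from zero uniformly in $l$. The adjacency clauses in (ii) and (iii) are local, constraining only the two letters immediately flanking each extremum of the bridge, and once the walk shape is fixed these letters are approximately uniformly distributed with a positive density of matching the prescribed labels.

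The main obstacle is simultaneously securing the balance constraint (i) for all $n$ relators: under the uniform measure on $\mathcal R_l$ one has $\mathbb P(\sigma_1(r_i)=\sigma_{n+1}(r_i))\asymp l^{-1/2}$ per relator, and naive multiplication across $n$ relators gives $l^{-n/2}\to 0$. The way around this is to avoid fixing the character rigidly: rather than restricting to $\phi=(1,0,\ldots,0,-1)$, one partitions $\mathcal{R}_l$ according to the rational kernel of the exponent matrix and runs the shape analysis inside each positive-density slice on which the exponent matrix admits a rational kernel vector of the required sign pattern, then sums the conditional contributions, in the spirit of the proof of \cite[Proposition~5.1]{KKW}.
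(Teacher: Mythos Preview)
Your opening deduction is correct: condition~(1) for $\phi$ forces exactly one generator to have $\phi<0$ and the rest $\phi\geq 0$, while condition~(1) for $-\phi$ forces exactly one generator to have $\phi>0$ and the rest $\phi\leq 0$; together these pin down exactly the sign pattern you describe. Unfortunately this very observation is fatal to the direct probabilistic strategy you then propose, at least when $n\geq 2$. A random deficiency~$1$ presentation has $b_1=1$ almost surely, so the character $\phi$ is unique up to sign and scaling; asking that $n-1$ of its coordinates vanish amounts to asking that $n-1$ specified $2\times 2$ minors of the $n\times(n+1)$ exponent-sum matrix vanish, an event of asymptotic probability zero. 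Your proposed fix---partitioning $\mathcal{R}_l$ by the rational kernel of the exponent matrix and summing over the slices whose kernel vector has the correct sign pattern---does not rescue the argument: those slices together still have vanishing density, because the limiting distribution of the kernel direction on projective space is absolutely continuous and the admissible directions form a measure-zero set. The obstacle you isolate (the $l^{-n/2}$ cost of the balance constraint for a \emph{fixed} $\phi$) is a symptom of this same phenomenon and cannot be averaged away within your framework.

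The paper's argument is of an entirely different nature and sidesteps the difficulty. It does not attempt to show that a random presentation \emph{already} satisfies the two conditions; instead it builds an explicit injection $f\colon \mathcal{R}'_l\to \mathcal{T}\cap\mathcal{R}'_{l+12}$ by taking an arbitrary presentation with $b_1=1$, choosing any nonzero $\phi$ vanishing on the relators, and \emph{surgically inserting} short commutators $[x_{n+1},x_i^{\epsilon}]$ at the first $\phi$-minimal vertex of each $C_{r_i}$ and $[x_{n+1}^{-1},x_i^{-\epsilon}]$ (squared, for $r_1$) at the first $\phi$-maximal vertex. These insertions manufacture fresh, isolated extrema with the prescribed adjacent labels, and the construction is reversible (one can strip the commutators back off), so $f$ is injective. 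The positive-density conclusion then comes from the elementary comparison $|\mathcal{R}'_l|/|\mathcal{R}'_{l+12}|>\varepsilon$, with no random-walk or bridge estimates required.
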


\begin{proof}
For each positive integer $l$, let $\mathcal{R}_l$ denote the set of $n$-tuples $(r_1, \ldots, r_n)$ of cyclically reduced words in the alphabet $\{x_1^{\pm}, \ldots, x_{n+1}^{\pm}\}$ of positive length $\leq l$. We define $\mathcal{R}'_l$ to be the subset of $n$-tuples $(r_1, \ldots, r_n)$ in $\mathcal{R}_l$, such that the group $G = \langle x_1 \ldots, x_{n+1} \mid r_1 , \ldots, r_n \rangle$ has first Betti number equal to 1.
We let $\mathcal{T}$ denote the set of all deficiency 1 presentations such that the resulting group admits a homomorphism to $\mathbb{Z}$ which satisfies the hypotheses. To prove the lemma, it suffices to construct an injective map $f \colon \mathcal{R}'_l \to \mathcal{T} \cap \mathcal{R}'_{l+12}$. Then, 
\[ \frac{|\mathcal{T} \cap \mathcal{R}'_{l+12}|}{|\mathcal{R}'_{l+12}|} \geq \frac{|\mathcal{R}'_l|}{|\mathcal{R}'_{l+12}|} > \varepsilon, \]
where $\varepsilon > 0$ depends only on $n$. The result follows by noting that $|\mathcal{R}'_l|/|\mathcal{R}_l| \to 1$ as $l\to \infty.$

Now for each $n$-tuple $(r_1, \ldots, r_n) \in \mathcal{R}'_l$, there exists a non-trivial map $\phi \colon F(x_1, \ldots, x_{n+1}) \to \mathbb{Z}$ such that $\phi(r_i) = 0$ for every $i$. After possibly re-ordering and inverting the generators $x_i$, we can assume that $\phi(x_{i}) \geq 0$ for all $i$, and $\phi(x_{n+1}) < 0$.
Our strategy for defining $f$ is to alter the $n$-tuple $(r_1, \ldots, r_n)$ so that $\phi$ still vanishes on each element of the tuple, and such that it has a unique minimum and repeated maxima. We do this by inserting commutators. We follow the convention where $[x,y] = xyx^{-1}y^{-1}$.

For each relator $r_i$, form a new relator $r_i'$ by inserting a commutator $[x_{n+1}, x_i^{\epsilon}]$ at the first $\phi$-minimal vertex along $C_{r_i}$, where $\epsilon = -1$ if $\phi(x_i) > 0$ and $\epsilon = 1$ otherwise. Now for each $i > 1$, form a new relator ${r_i}''$ by inserting the commutator $[x_{n+1}^{-1}, x_i^{-\epsilon}]$ at the first $\phi$-maximal vertex along $C_{r_i'}$. Form $r_1''$ by inserting the square $[x_{n+1}^{-1}, x_1^{-\epsilon}]^2$ of the commutator at the first $\phi$-maximal vertex along $C_{r_1'}$. The lower section $L_{\phi}(r''_i)$ of each $r_i''$ consists of a single vertex or an edge labelled by the element $x_i$. The upper section $U_{\phi}(r''_{1})$ of $r_1''$ consists of two vertices or two edges labelled by $x_1$, and for $i > 1$ the upper section $U_{\phi}(r''_{i})$ of $r_i''$ consists of a single vertex or edge labelled by $x_i$. Hence $((r_1'', \ldots, r_n''), \phi)$ satisfies the unique minimum condition and $((r_1'', \ldots, r_n''), -\phi)$ satisfies the repeated minimum condition. The map $f$ is injective since there exists a left inverse $g \colon \mathrm{im}(f) \to \mathcal{R}_l$  of $f$ which acts by removing the commutators at the $\phi$-minimal and $\phi$-maximal vertices or edges of the $r_i''$. This completes the proof. \end{proof}

\begin{thm}\label{non-symmetricity}
Let $G$ be a random group of deficiency 1. Then with positive asymptotic probability, $\Sigma(G) \cap H^1(G; \mathbb{Z})$ is non-symmetric. 
\end{thm}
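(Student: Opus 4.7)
The plan is to combine \cref{min_max} with \cref{sigma invariant}. By \cref{min_max}, with positive asymptotic probability the random group $G$ admits a character $\phi \colon G \to \mathbb{Z}$ such that the chosen relators together with $\phi$ satisfy the unique minimum condition and together with $-\phi$ satisfy the repeated minimum condition. Applying the two parts of \cref{sigma invariant} then yields $\phi \in \Sigma(G)$ and $-\phi \notin \Sigma(G)$, witnessing non-symmetry of $\Sigma(G) \cap H^1(G; \mathbb{Z})$ under the antipodal involution.

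The only hypothesis of \cref{sigma invariant} that is not automatic from \cref{min_max} is the zero-divisor freeness of $\mathbb{Q}G$. To secure this, I would invoke the result of Kielak--Kropholler--Wilkes \cite{KKW} that a random few-relator deficiency 1 group is free-by-cyclic with positive asymptotic probability. Every free-by-cyclic group is locally indicable (as an extension of a locally indicable group by $\mathbb{Z}$), and by a classical theorem of Higman the rational group ring of a locally indicable group has no non-trivial zero-divisors. Hence, among free-by-cyclic random deficiency 1 groups, \cref{sigma invariant} is immediately applicable.

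The main obstacle I foresee is ensuring that the two positive-probability events --- the character condition of \cref{min_max} and the free-by-cyclicity condition of \cite{KKW} --- co-occur with positive probability, rather than holding only individually. I would resolve this by inspecting the injective map $f \colon \mathcal{R}'_l \to \mathcal{T} \cap \mathcal{R}'_{l+12}$ built in the proof of \cref{min_max}: since $f$ acts by inserting commutators $[x_{n+1}, x_i^{\pm 1}]$ and their inverses into the relators, it preserves the abelianization of the presented group and leaves the Alexander/Fox-derivative data used in \cite{KKW} to detect free-by-cyclicity essentially unchanged. Restricting the domain of $f$ to the positive-density subset of $\mathcal{R}'_l$ on which the presented group is free-by-cyclic, and checking that this property is inherited by the modified presentation $f(r_1,\ldots,r_n)$, produces a positive-density subset of presentations on which both the character condition and zero-divisor freeness hold simultaneously. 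The conclusion then follows directly from \cref{sigma invariant}, completing the proof.
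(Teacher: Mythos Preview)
Your overall plan --- combine \cref{min_max} with \cref{sigma invariant}, and isolate the zero-divisor hypothesis as the only missing ingredient --- matches the paper's. The flaw is in the route you propose for securing that hypothesis.

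You want to restrict to presentations whose group is free-by-cyclic, so that local indicability and Higman's theorem give a zero-divisor-free $\mathbb{Q}G$. But a random deficiency~1 group has first Betti number~$1$ almost surely, so up to sign and scaling there is a \emph{unique} non-zero character~$\phi$. If such a $G$ is free-by-cyclic then $\ker\phi$ is finitely generated, and by standard BNS theory this forces \emph{both} $\phi$ and $-\phi$ to lie in $\Sigma(G)$; that is, $\Sigma(G)\cap H^1(G;\mathbb{Z})$ is automatically symmetric. The two events you are trying to intersect --- ``$G$ is free-by-cyclic'' and ``$\Sigma(G)\cap H^1(G;\mathbb{Z})$ is non-symmetric'' --- are therefore \emph{disjoint} on the almost-sure event $b_1(G)=1$, not merely difficult to make co-occur. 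In particular your claim that the map $f$ of \cref{min_max} preserves free-by-cyclicity cannot hold: if the modified group had zero-divisor-free $\mathbb{Q}G$ \emph{and} satisfied the repeated minimum condition for $-\phi$, then \cref{sigma invariant}(2) would give $-\phi\notin\Sigma(G)$, ruling out free-by-cyclicity. Relatedly, the assertion that inserting commutators leaves the Fox-derivative data ``essentially unchanged'' is not right; altering the minimum/maximum structure of those derivatives is precisely the purpose of the insertion.

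The paper avoids this circularity by obtaining zero-divisor freeness on an \emph{almost-sure} event rather than a positive-probability one: a random deficiency~1 presentation satisfies the $C''(\tfrac{1}{6})$ small-cancellation condition almost surely, hence by Wise and Agol the group is virtually special, and by Schreve it satisfies the Atiyah conjecture, so $\mathbb{Q}G$ has no non-trivial zero-divisors. Since this event has probability tending to~$1$, its intersection with the positive-probability event from \cref{min_max} still has positive probability, and \cref{sigma invariant} then applies directly.
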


\begin{proof}
A random deficiency 1 presentation satisfies the $C''(\frac{1}{6})$ condition, almost surely \cite{Gromov1993}. Combining the work of Wise \cite{WiseCubes} and Agol \cite{Agol13}, it follows that such a group is virtually special and thus satisfies the Atiyah conjecture by \cite{Schreve14}. Hence $\mathbb{Q}G$ has no non-trivial zero-divisors. By Lemma~\ref{min_max}, a random deficiency 1 group admits a character $\phi$ such that $\phi$ satisfies the unique minimum condition and $-\phi$ satisfies the repeated minimum condition, with positive asymptotic probability. Thus by Lemma~\ref{sigma invariant}, $\phi \in \Sigma(G)$ and $-\phi \not \in \Sigma(G)$.\end{proof}

\begin{corollary}[Theorem~\ref{random}]
Let $G$ be a random group of deficiency 1. Then with positive asymptotic probability, $G$ is not subgroup separable.
\end{corollary}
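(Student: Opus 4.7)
The plan is immediate from the machinery already developed in this section: combine \cref{non-symmetricity} with \cref{separability}. The former asserts that with positive asymptotic probability a random deficiency 1 group $G$ has $\Sigma(G) \cap H^1(G;\mathbb{Z})$ non-symmetric under the antipodal involution, and the latter asserts that any group with non-symmetric integral BNS invariant fails to be subgroup separable. Chaining these two implications on the same positive-probability event delivers the corollary with no further work.

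I do not expect any real obstacle here: the event produced by \cref{non-symmetricity} is literally the hypothesis of \cref{separability}, so nothing extra has to be verified on the positive-probability set, and because we only need positive asymptotic probability rather than probability tending to $1$, no quantitative tracking of constants is required. Conceptually, the character $\phi$ produced inside the proof of \cref{min_max} and then promoted by \cref{non-symmetricity} witnesses $G$ as an ascending, non-descending HNN extension $A \ast_\theta$ with $\theta \colon A \to A$ injective but not surjective; the subgroup $\mathrm{im}(\theta) \leq A$ is then the \emph{poison} subgroup, conjugate in $G$ to its own proper subgroup and therefore inseparable from any element of $A \setminus \mathrm{im}(\theta)$ in any finite quotient, by the Burns--Karrass--Solitar-style argument recalled in the proof of \cref{separability}. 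This mirrors the mechanism of \cite{LNW} and is genuinely different in spirit from the graph-manifold-style obstruction used for \cref{linear_non_LERF}.
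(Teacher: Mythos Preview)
Your proposal is correct and matches the paper's own proof, which is the one-line combination of \cref{non-symmetricity} with \cref{separability}. The additional conceptual commentary you provide about the ascending HNN structure and the poison subgroup $\mathrm{im}(\theta)$ is accurate and simply unpacks what is already contained in the proof of \cref{separability}.
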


\begin{proof}
Combine Theorem~\ref{non-symmetricity} with Lemma~\ref{separability}.\end{proof}

\begin{corollary}[Theorem~\ref{def1}]
Let $G$ be a random group of deficiency 1. Then $G$ is free-by-cyclic with asymptotic probability that is positive and bounded away from 1.
\end{corollary}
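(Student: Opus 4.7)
The plan is to prove the two halves separately. The positivity half is immediate from the Kielak--Kropholler--Wilkes result \cite{KKW}, which asserts that a random few-relator deficiency 1 group is free-by-cyclic with positive asymptotic probability, and this was already recalled earlier in the introduction.

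For the ``bounded away from 1'' half, the strategy is to combine Theorem~\ref{non-symmetricity} with a structural observation about the BNS invariants of free-by-cyclic groups. Specifically, whenever $G \simeq F_n \rtimes \mathbb{Z}$ and $b_1(G) = 1$, the set $\Sigma(G) \cap H^1(G;\mathbb{Z})$ is necessarily symmetric under the antipodal involution. Indeed, a fibered character $\psi$ has a finitely generated (free) kernel, so both $\psi$ and $-\psi$ lie in $\Sigma(G)$. When $b_1(G) = 1$, the primitive character generating $H^1(G;\mathbb{Z})$ is, up to sign, a positive rational multiple of $\psi$, and since $\Sigma(G)$ is conical under positive scaling, both signs of the primitive character lie in $\Sigma(G)$.

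Next, I would recall that the proof of Lemma~\ref{min_max} already establishes that a random deficiency 1 group has $b_1 = 1$ asymptotically almost surely, since $|\mathcal{R}'_l|/|\mathcal{R}_l| \to 1$ as $l \to \infty$. Intersecting this almost-sure event with the positive asymptotic probability event supplied by Theorem~\ref{non-symmetricity}, one concludes that with positive asymptotic probability a random deficiency 1 group $G$ simultaneously satisfies $b_1(G) = 1$ and has non-symmetric $\Sigma(G) \cap H^1(G;\mathbb{Z})$. By the observation of the previous paragraph, such a $G$ cannot be free-by-cyclic, which together with \cite{KKW} completes the proof.

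The main conceptual content is already packaged into Theorem~\ref{non-symmetricity}, so the remaining obstacle is genuinely minor: one must only verify that the non-symmetric character produced by Lemma~\ref{min_max} is, in the relevant events, the unique primitive integer character up to sign. This is precisely what the $b_1 = 1$ condition guarantees, so no further delicate analysis is required and the argument reduces to bookkeeping of the two probabilistic events.
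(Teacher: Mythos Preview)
Your proposal is correct and follows essentially the same route as the paper: positivity from \cite{KKW}, and the upper bound by combining the almost-sure event $b_1(G)=1$ with Theorem~\ref{non-symmetricity} to conclude that $G$ cannot algebraically fibre. The paper phrases the final step as ``non-symmetric and $b_1=1$ forces $\Sigma(G)\cap -\Sigma(G)=\emptyset$, hence no fibration'', while you take the contrapositive ``free-by-cyclic with $b_1=1$ forces symmetry''; these are the same argument.
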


\begin{proof}
A random deficiency 1 group has first Betti number $b_1(G)$ equal to 1, almost surely. Hence $\mathrm{Hom}(G, \mathbb{Z}) \simeq \mathbb{Z} \simeq \langle \phi \rangle$. By Theorem~\ref{non-symmetricity}, $\Sigma(G) \cap H^1(G; \mathbb{Z})$ is non-empty and non-symmetric, with positive asymptotic probability. Hence $\Sigma(G) = \{\lambda \phi \mid \lambda \in \mathbb{R}_{>0}\}$ or $\Sigma(G) = \{\lambda \phi \mid \lambda \in \mathbb{R}_{<0}\}.$ In particular $\Sigma(G) \cap - \Sigma(G) = \emptyset$ and thus $G$ does not fibre algebraically. Hence the asymptotic probability that a random deficiency 1 group is free-by-cyclic is bounded away from 1. The fact that it is greater than 0 follows from \cite[Theorem~A]{KKW}.\end{proof}

\bibliographystyle{alpha}
\bibliography{refs.bib}

\end{document}